\documentclass[12pt,dvipsnames]{article}

\usepackage{amsmath}
\usepackage{amssymb}
\usepackage{amsthm}
\usepackage[UKenglish]{babel}
\usepackage{bm}
\usepackage[colorlinks=true,allcolors=purple]{hyperref}
\usepackage[capitalize]{cleveref}
	\crefformat{equation}{\textup{#2(#1)#3}}
\usepackage{dsfont}
\usepackage{enumitem}
\usepackage[OT2,OT1]{fontenc}
\usepackage{geometry}
\usepackage[cal=cm,scr=boondoxo]{mathalfa}
\usepackage{pifont}
\usepackage{stmaryrd}
\usepackage{textcomp}
\usepackage[textwidth=3cm,colorinlistoftodos]{todonotes}

\numberwithin{equation}{section}			
\swapnumbers						

\newenvironment{Enumerate}{\begin{enumerate}[label={\rm({\roman*})}]}{\end{enumerate}}

\newcounter{StepsCount}				

\newcommand{\descriptionlabelsave}{}		
\newenvironment{Itemize}{%
	\renewcommand{\descriptionlabelsave}{\descriptionlabel}\renewcommand{\descriptionlabel}{$\triangleright$}%
	\begin{description}[leftmargin=15pt,itemindent=-5.2pt]}{%
	\end{description}\renewcommand{\descriptionlabel}{\descriptionlabelsave}}

\newenvironment{Ilist}{
	\begin{list}{$\triangleright$}{\leftmargin=0pt \labelwidth=11pt \itemindent=\labelwidth%
	\itemsep=5pt\listparindent=\parindent}}{\end{list}}

\theoremstyle{plain}
	\newtheorem{lemma}{Lemma}[section]
	\newtheorem{proposition}[lemma]{Proposition}
	\newtheorem{theorem}[lemma]{Theorem}
	\newtheorem{corollary}[lemma]{Corollary}
\theoremstyle{definition}
	\newtheorem{definition}[lemma]{Definition}
\theoremstyle{remark}
	\newtheorem{remark}[lemma]{Remark}
	\newtheorem{example}[lemma]{Example}
\newenvironment{Lemma}{\begin{lemma}}{\par\noindent\rule{5em}{1pt}\end{lemma}}

\newenvironment{Definition}{\begin{definition}}{\hfill$\blacktriangleleft$\end{definition}}
\newenvironment{Remark}{\begin{remark}}{\hfill$\vartriangleleft$\end{remark}}

\newcommand{\mc}[1]{{\mathcal{#1}}}			
\newcommand{\ms}[1]{{\mathscr{#1}}}			
\newcommand{\mf}[1]{{\mathfrak{#1}}}			
\newcommand{\bb}[1]{{\mathbb{#1}}}			
\newcommand{\ov}{\overline}				
\newcommand{\mr}{\mathring}				

\newcommand{\smmatrix}[4]{\Bigl(			
\begin{smallmatrix}
\hspace*{-0.2ex} #1 \hspace*{0.2ex} & \hspace*{0.2ex} #2 \hspace*{-0.2ex}
\\[0.5ex]
\hspace*{-0.2ex} #3 \hspace*{0.2ex} & \hspace*{0.2ex} #4 \hspace*{-0.2ex}
\end{smallmatrix}
\Bigr)}
\newcommand{\Dummy}{\text{\textvisiblespace\kern1pt}}	
\DeclareMathOperator{\Span}{span}			
\DeclareMathOperator{\Ran}{ran}				
\DeclareMathOperator{\Dom}{dom}				

\newcommand{\DS}{\mid\mkern3mu}				
\newcommand{\DSB}{\mkern4.5mu\Big|\mkern7.5mu}		
\newcommand{\DP}{{\mathop:\kern5pt}}			
\newcommand{\DF}{\colon}				
\newcommand{\DE}{\mathrel{\mathop:}=}			
\newcommand{\DI}{\mathrel{\mathop:}\Leftrightarrow}	
\newcommand{\DD}{\mkern3mu\mathrm{d}}			

\newcommand{\CAS}{&\text{if}\ }				
\newcommand{\CASO}{&\text{otherwise}}			

\renewcommand{\oe}{{\ddot o}}
\newcommand{\Ham}{\bb H}
\newcommand{\HamCC}{\bb H_{\sf cc}}
\newcommand{\HamCP}{\bb H_{\sf cp}}
\newcommand{\HamPC}{\bb H_{\sf pc}}
\newcommand{\HamPP}{\bb H_{\sf pp}}
\newcommand{\Tmax}{T_{\sf max}}
\newcommand{\TDmax}{T_{\Delta,\sf max}}

\usepackage{upgreek}
\usepackage[UKenglish]{isodate}
\newcommand\Gammar{\Gamma_{\textup{\textsf{r}}}}
\newcommand\Gammas{\Gamma_{\textup{\textsf{s}}}}
\newcommand{\mrr}[1]{\overset{\text{\rm\raisebox{-1pt}{\tiny\textmarried}}}{#1}}

\newcommand{\tildeW}{\hspace{0.42ex}\widetilde{\rule[1.5ex]{1.5ex}{0ex}}\hspace{-2.0ex}W}

\begin{document}

\begin{flushleft}
	{\Large\bf The direct spectral problem for indefinite \\[0.5ex] canonical systems}
	\\[5mm]
	\textsc{
	Matthias Langer
	\,\ $\ast$\,\ 
	Harald Woracek 
	}
\end{flushleft}
	{\small
	\textbf{Abstract:}
		For indefinite (Pontryagin space) canonical systems that contain an inner singularity we prove the existence
		of generalised boundary values at the singularity, which are used to formulate interface conditions.
		With the help of such interface conditions we construct the monodromy matrix of the canonical system 
		and write it as a product of matrices, which separates the contributions of the Hamiltonian function 
		and the finitely many discrete parameters that are associated with the singularity.
	}
\begin{flushleft}
	{\small\textbf{AMS MSC 2020:} 34B05, 47B50, 34B20
	\\
	\textbf{Keywords and phrases:}
	canonical system, inner singularity, Pontryagin space, direct spectral theorems
	}
\end{flushleft}

\begin{quote}
	\textit{%
		This paper is dedicated to the memory of our teacher Heinz Langer, to whom we owe more than we can express in words.
	}
\end{quote}


%
\section{Introduction}
%

A two-dimensional canonical system is a differential equation of the form 
\begin{equation}
\label{D41}
	y'(t)=zJH(t)y(t), \qquad t\in(s_-,s_+),
\end{equation}
where $z$ is a complex parameter, $J$ is the symplectic matrix $J\DE\smmatrix 0{-1}10$, $(s_-,s_+)$ is a finite or infinite 
non-empty interval, and where $H$ is a measurable function taking real, positive semi-definite $2\times 2$-matrices as values. 
The function $H$ is called the Hamiltonian of the system. 

A natural condition for solutions to exist is that $H$ is locally integrable on $(s_-,s_+)$. 
The behaviour of the system towards the endpoints $s_-,s_+$ highly depends on the growth of $H$ towards these endpoints.  
Let us consider the case when $H\in L^1((s_-,s_+),\bb R^{2\times 2})$.  Then every solution has an absolutely
continuous extension to $[s_-,s_+]$ and the initial value problem
\begin{equation}
\label{D42}
	\left\{
	\begin{array}{l}
		\dfrac{\partial}{\partial t}W_H(t,z)J = zW_H(t,z)H(t),\qquad t\in[s_-,s_+],
		\\[2.5ex]
		W_H(s_-,z)=I.
	\end{array}
	\right.
\end{equation}
has a unique solution $W_H\DF[s_-,s_+]\times\bb C\to\bb C^{2\times2}$ (for technical reasons we pass to transposes:
the transposes of the rows of $W_H$ satisfy \eqref{D41}). 
The matrix function $W_H\DE W_H(s_+,\Dummy)$ is called the \emph{monodromy matrix} of $H$ 
and is a central object in the (spectral) theory of canonical systems with integrable $H$.

The monodromy matrix $W_H$ is an entire $2\times 2$-matrix-valued function, is real along the real axis, satisfies
$W_H(0)=I$ and $\det W_H(z)=1$ for $z\in\bb C$, and is $J$-expansive, i.e.\ the kernel 
\begin{equation}
\label{D43}
	K(z,w)\DE\frac{W_H(z)JW_H(w)^*-J}{z-\ov w}, \qquad z,w\in\bb C,
\end{equation}
is positive.  Let us denote the set of all matrix functions with these properties by $\mc M_0$.  
The class $\mc M_0$ is very natural because the following inverse theorem holds: 
given $W\in\mc M_0$, there exist (an essentially unique) $H$ as above such that $W$ is the monodromy matrix of $H$.
Note that the solution is in general non-constructive; only in some cases constructive algorithms are available. 

A connection between a Hamiltonian and its monodromy matrix exists also on an operator-theoretic level.  Equation \eqref{D41}
gives rise to a differential operator in a space $L^2(H)$ of $2$-vector-valued functions, the monodromy matrix gives rise to an
operator in a reproducing kernel Hilbert space of $2$-vector-valued entire functions or (using Krein's $Q$-function theory) a
multiplication operator in an $L^2$-space.  All these models are unitarily equivalent in a natural way. 

Let us now relax the condition that the kernel \eqref{D43} is positive, and proceed to a sign-indefinite setting: we denote by 
$\mc M_{<\infty}$ the set of all entire $2\times 2$-matrix-valued functions $W$ that are real along the real axis, satisfy 
$W(0)=I$, $\det W=1$, and have the property that the kernel \eqref{D43} has a finite number of negative squares. 

On the operator-theoretic side we have analogues of the reproducing kernel model and the $Q$-function model, the only
difference being that the operators act in a Pontryagin space instead of a Hilbert space. Several decades ago M.G.~Krein and
H.~Langer posed the question%
\footnote{The second author learned about this question already during his PhD time from his supervisor H.~Langer.}
whether there also exists an analogue of the differential operator model that resembles a canonical
system, so that a matrix $W\in\mc M_{<\infty}$ can be interpreted as the monodromy matrix of a `kind of differential operator'
given by a `kind of indefinite Hamiltonian'.  An affirmative answer was given in a series of papers 
\cite{kaltenbaeck.woracek:db}--\cite{kaltenbaeck.woracek:p6db}.
The analogue of a Hamiltonian $H\in L^1((s_-,s_+),\bb R^{2\times 2})$ consists of a measurable function $H$ that takes
real, positive semi-definite $2\times 2$-matrices as values and is locally integrable on a set 
$[s_-,s_+]\setminus\{\sigma_1,\ldots,\sigma_m\}$ where $s_-<\sigma_1<\ldots<\sigma_m<s_+$, and of a certain finite
number of parameters attached to each of the exceptional points $\sigma_i$. 

The intuition behind the construction from \cite{kaltenbaeck.woracek:db}--\cite{kaltenbaeck.woracek:p6db} is that one solves a
canonical system on each connected component of $[s_-,s_+]\setminus\{\sigma_1,\ldots,\sigma_m\}$ and fits the separate solutions
together in a certain way that takes into account the asymptotic behaviour of $H$ towards the singularities $\sigma_i$ and
the additional data attached to them. Phrased slightly differently, the monodromy matrix $W_H$ can be obtained by
solving the initial value problem \eqref{D42} on the interval $[s_-,\sigma_1)$, then jump over the singularity $\sigma_1$ by which a
certain twist may happen, then solve the equation on $(\sigma_1,\sigma_2)$, jump over $\sigma_2$, and so on. However, the 
construction of the operator model does not at all expose this intuition; it is purely operator-theoretic relying on the tools from 
\cite{jonas.langer.textorius:1992,krein.langer:1978}.

The main goal of this paper is to bridge that gap.  We show that `jumping over singularities' can be realised by an
interface condition that connects both sides of the singularities.  The basis for this result is laid out in \cref{D115},
where generalised boundary values at a singularity are constructed.
The actual construction of the `continuation' of a solution is contained in \cref{D123}. 

We close this introduction with two important remarks.
\begin{Ilist}
\item 
	We restrict all considerations in the paper to the essential building blocks of indefinite Hamiltonians, so-called
	`elementary indefinite Hamiltonians of kind (A)'.  
	These cover most cases of indefinite Hamiltonians with one singularity.  The remaining cases with one singularity,
	namely `elementary indefinite Hamiltonians of types (B) and (C)', are very simple, and the forward problem 
	is solved explicitly in \cite[Proposition~4.31]{kaltenbaeck.woracek:p5db}.
	For indefinite Hamiltonians with more than one singularity one can paste several elementary indefinite Hamiltonians; see 
	\cite[\S3\,c,d,e]{kaltenbaeck.woracek:p5db}. 
\item
	An essential step in order to reach our goals is to give a unitarily equivalent form of the operator model as an
	(explicit) finite-dimensional perturbation of the natural differential operator induced by \eqref{D41}. This is done in
	\cref{D38} and is an extension of a corresponding result given in \cite{langer.woracek:esmod}.
\end{Ilist}

%
\section{Reminder about canonical systems I. \\ The positive definite case}
\label{D150}
%

In this section we recall definitions and facts about canonical systems with a positive semi-definite Hamiltonian. 
Besides the standard notions (for which we refer to, e.g.\ \cite{hassi.snoo.winkler:2000,romanov:1408.6022v1,remling:2018}), 
we specifically deal with some notions that are needed to introduce the Pontryagin space analogue of 
such systems (these are collected from \cite{kaltenbaeck.woracek:p4db}).

\subsection{Canonical systems with positive Hamiltonian}
\label{D151}

The equation we study is given as follows.

\begin{Definition}
\label{D1}
	Let $s_-,s_+\in\bb R\cup\{\infty\}$ with $-\infty<s_-<s_+\leq\infty$, and let $H\DF(s_-,s_+)\to\bb R^{2\times 2}$. 
	We call $H$ a \emph{Hamiltonian on $(s_-,s_+)$} if 
	\begin{Enumerate}
	\item $H(t)\ge 0$ for a.a.\ $t\in(s_-,s_+)$,
	\item $H$ is measurable and locally integrable on $(s_-,s_+)$,
	\item $\{t\in(s_-,s_+)\DS H(t)=0\}$ has measure $0$.
	\end{Enumerate}
	We denote the set of all Hamiltonians on $(s_-,s_+)$ by $\Ham(s_-,s_+)$.

	Let $H\in\Ham(s_-,s_+)$. The \emph{canonical system} with Hamiltonian $H$ is the equation 
	\begin{equation}
	\label{D2}
		y'(t)=zJH(t)y(t)
	\end{equation}
	where 
	\[
		J\DE\begin{pmatrix} 0 & -1\\ 1 & 0\end{pmatrix}\quad\text{and}\quad z\in\bb C.
	\]
	A \emph{solution} $y(t)$ of the canonical system \eqref{D2} is a locally absolutely continuous function 
	$y\DF(s_-,s_+)\to\bb C^2$ that satisfies \eqref{D2} for a.a.\ $t\in(s_-,s_+)$. 

	When the entries of $H$ are needed, we write 
	\begin{equation}
	\label{D44}
		H(t)=\begin{pmatrix} h_1(t) & h_3(t) \\ h_3(t) & h_2(t) \end{pmatrix}.
	\end{equation}
\end{Definition}

\noindent
Intervals where a Hamiltonian is of a particularly simple form play a---sometimes exceptional---role. 
For $\phi\in\bb R$ we denote by $\xi_\phi$ the vector 
\[
	\xi_\phi\DE\binom{\cos\phi}{\sin\phi}.
\]

\begin{Definition}
\label{D3}
	Let $H$ be a Hamiltonian on $(s_-,s_+)$, and let $a,b\in\bb R$ with $s_-\le a<b\le s_+$. 
	Then the interval $(a,b)$ is called \emph{$H$-indivisible} if there exists $\phi\in\bb R$ such that 
	$\Ran H(t)=\Span\{\xi_\phi\}$ for a.a.\ $t\in(a,b)$. 

	If $(a,b)$ is $H$-indivisible, the value of $\phi$ above is determined up to integer multiples of $\pi$, and we call it
	(strictly, the equivalence class modulo $\pi$) the \emph{type} of the indivisible interval $(a,b)$. 
\end{Definition}

\noindent
On every compact subset of its domain $(s_-,s_+)$ a Hamiltonian $H$ is, by definition, integrable,
and this guarantees existence and uniqueness of solutions of the canonical system with Hamiltonian $H$. 
At the boundary points $s_\pm$ the Hamiltonian may or may not be integrable, and the behaviour of solutions depends on this. 

\begin{Definition}
\label{D4}
	Let $H\in\Ham(s_-,s_+)$. We say that $H$ is in \emph{limit circle case at $s_-$} if $H$ is integrable on some (and
	hence every) interval $(s_-,a)$ where $a\in(s_-,s_+)$.  If $H$ is not in limit circle case at $s_-$, we say that $H$ is
	in \emph{limit point case at $s_-$}. The completely analogous terminology applies at the boundary point $s_+$. 

	We include these case distinctions into our notation by partitioning $\Ham(s_-,s_+)$ as the disjoint union 
	of the four sets
	\[
		\HamCC(s_-,s_+),\quad \HamCP(s_-,s_+),\quad \HamPC(s_-,s_+),\quad \HamPP(s_-,s_+),
	\]
	where $\HamCC(s_-,s_+)$ is the set of all Hamiltonians that are in limit circle case at $s_-$ and $s_+$, 
	$\HamCP(s_-,s_+)$ is the set of all Hamiltonians that are in limit circle case at $s_-$ and in limit point case at $s_+$, 
	etc.
\end{Definition}

\noindent
If $H$ is in limit circle case at $s_-$, then every solution of the canonical system has an extension to $[s_-,s_+)$ that is
absolutely continuous on every interval $[s_-,a)$ where $a\in(s_-,s_+)$. Furthermore, the initial value problem prescribing an 
initial value at $s_-$ is uniquely solvable for every initial value. The analogous statements hold for $s_+$ in place of $s_-$. 

In the next definition we pass to transposes compared to \eqref{D2}; this has certain technical advantages.

\begin{Definition}
\label{D5}
	Let $H\in\HamCC(s_-,s_+)\cup\HamCP(s_-,s_+)$. Then we denote by $W_H\DF[s_-,s_+)\times\bb C\to\bb C^{2\times 2}$ the
	unique solution of
	\begin{align}
		& \frac{\partial}{\partial t}W_H(t,z)J=zW_H(t,z)H(t)\qquad\text{for $t\in(s_-,s_+)$ a.e.},
		\label{D70}
		\\[1ex]
		& W_H(s_-,z)=I,
		\label{D71}
	\end{align}
	and refer to $W_H$ as the \emph{fundamental solution of $H$}. 

	If $H\in\HamCC(s_-,s_+)$, then $W_H(\Dummy,z)$ exists up to $s_+$. We write 
	\[
		W_H(z)\DE W_H(s_+,z)
	\]
	and speak of $W_H$ as the \emph{monodromy matrix of $H$}.
\end{Definition}

\subsection{The operator model}
\label{D152}

The canonical system \eqref{D2} is the formal eigenvalue equation of a certain operator (in general, a linear relation) in a 
certain Hilbert space. This space is essentially the usual $L^2$-space with respect to the matrix measure $H(t)\DD t$, but also
takes care of indivisible intervals in an appropriate way. 

\begin{Definition}
\label{D6}
	Let $H\in\Ham(s_-,s_+)$. 
	\begin{Enumerate}
	\item 
		We define an equivalence relation $=_H$ on the set of all measurable functions of $(s_-,s_+)$ into $\bb C^2$ as 
		\[
			f_1=_Hf_2 \;\DI\; Hf_1=Hf_2 \; \text{ a.e.}
		\]
		We denote by $f/_{=_H}$ the equivalence class of $f$.
	\item 
		The space $L^2(H)$ is the set of all equivalence classes modulo $=_H$ of measurable functions 
		$f\DF(s_-,s_+)\to\bb C^2$ that satisfy 
		\begin{Itemize}
		\item 
			${\displaystyle
			\int_{s_-}^{s_+}f^*(t)H(t)f(t)\DD t<\infty
			}$;
		\item 
			for every $H$-indivisible interval $(a,b)$ the function $\xi_\phi^*f$ is constant a.e.\ on $(a,b)$ where
			$\phi$ is the type of $(a,b)$. 
		\end{Itemize}
	\item 
		An inner product $(\Dummy,\Dummy)_H$ on $L^2(H)$ is defined by
		\[
			(f,g)_H \DE \int_{s_-}^{s_+}g^*(t)H(t)f(t)\DD t.
		\]
	\end{Enumerate}
\end{Definition}

\noindent
The \emph{maximal operator} (or relation) associated with a canonical system is defined by means of its graph as follows.

\begin{Definition}
\label{D7}
	Let $H\in\Ham(s_-,s_+)$. Then we define 
	\begin{align*}
		\Tmax(H) &\DE \Big\{(f,g)\in L^2(H) \times L^2(H)\DSB 
		\\
		&\hspace*{12ex} \exists\hat f\text{ locally a.c.}\DP \hat f/_{=_H}=f \;\wedge\; \hat f'=JHg\text{ a.e.}\Big\}.
	\end{align*}
	Note here that we may write $Hg$ since this expression is independent of the choice of the representative of the equivalence
	class $g$.
\end{Definition}

\noindent
If $H$ is in limit circle case at $s_-$, then every locally absolutely continuous representative $\hat f$ 
as in the definition of $\Tmax(H)$ has an extension to $[s_-,s_+)$, which is absolutely continuous on every interval $[s_-,a)$ 
where $a\in(s_-,s_+)$.  
The analogous statement holds with $s_-$ replaced by $s_+$.  This justifies the definition of boundary values.

\begin{Definition}
\label{D8}
	\phantom{}
	\begin{Enumerate}
	\item Let $H\in\HamCC(s_-,s_+)\cup\HamCP(s_-,s_+)$. Then we define 
		\begin{align*}
			\Gamma_-(H) &\DE \Big\{\big((f,g),c\big)\in (L^2(H)\times L^2(H))\times\bb C^2\DSB 
			\\
			&\hspace*{5ex} \exists\hat f\text{ locally a.c.}\DP 
			\hat f/_{=_H}=f \;\wedge\; \hat f'=JHg\text{ a.e.} \;\wedge\; \hat f(s_-)=c\Big\}.
		\end{align*}
	\item Let $H\in\HamCC(s_-,s_+)\cup\HamPC(s_-,s_+)$. Then we define 
		\begin{align*}
			\Gamma_+(H) &\DE \Big\{\big((f,g),c\big)\in (L^2(H)\times L^2(H))\times\bb C^2\DSB 
			\\
			&\hspace*{5ex} \exists\hat f\text{ locally a.c.}\DP 
			\hat f/_{=_H}=f \;\wedge\; \hat f'=JHg\text{ a.e.} \;\wedge\; \hat f(s_+)=c\Big\}.
		\end{align*}
	\end{Enumerate}
\end{Definition}

\begin{Remark}\label{D72}
	Provided that the interval $(s_-,s_+)$ is not $H$-indivisible, for any element $(f,g)\in\Tmax(H)$ there
	exists a unique representative $\hat f$ as in the definition of $\Tmax(H)$; see \cite[Lemma~3.5]{hassi.snoo.winkler:2000}.
\end{Remark}

\begin{Remark}\label{D76}
	Sometimes we need Green's identity in the following form: if $f$ and $u$ are
	absolutely continuous functions on $[x_1,x_2]$ where $s_-\le x_1<x_2\le s_+$ and $g,\,v$
	are such that
	\[
		f' = JHg, \qquad u' = JHv, \qquad \text{a.e.\ on}\;\; (x_1,x_2),
	\]
	then
	\begin{equation}
	\label{D108}
		\int_{x_1}^{x_2}u^*Hg - \int_{x_1}^{x_2}v^*Hf
		= u(x_1)^*Jf(x_1) - u(x_2)^*Jf(x_2);
	\end{equation}
	see \cite[Remark~2.20]{kaltenbaeck.woracek:p4db}.
\end{Remark}

\subsection{The conditions {\sf(I)} and {\sf(HS)}}
\label{D153}

We introduce two conditions on a Hamiltonian $H$ which are needed to prepare the ground for the indefinite setting. 

\begin{Definition}
\label{D9}
	Let $H\in\HamCP(s_-,s_+)$. 
	\begin{Enumerate}
	\item We say that $H$ satisfies the condition \textsf{(I)} if 
		\[
			\int_{s_-}^{s_+}\binom 10^*H(t)\binom 10\DD t<\infty.
		\]
	\item We say that $H$ satisfies the condition {\sf(HS)} if there exists $\phi\in\bb R$ such that 
		\begin{align}
			& \int_{s_-}^{s_+}\xi_\phi^*H(t)\xi_\phi\DD t<\infty,
			\nonumber
			\\[2mm]
			& \int_{s_-}^{s_+}\xi_{\phi+\frac\pi 2}^*\Big(\int_{s_-}^tH(s)\DD s\Big)\xi_{\phi+\frac\pi 2}\cdot
			\xi_\phi^*H(t)\xi_\phi\DD t<\infty.
			\label{D45}
		\end{align}
	\end{Enumerate}
	For $H\in\HamPC(s_-,s_+)$ the conditions {\sf(I)} and {\sf(HS)} are defined analogously, the only difference being that
	in \cref{D45} the expression within the round brackets is replaced by 
	\[
		\Big(\int_t^{s_+}H(s)\DD s\Big).
	\]
\end{Definition}

\noindent
We note that $H\in\HamCP(s_-,s_+)\cup\HamPC(s_-,s_+)$ satisfies {\sf(I)} and {\sf(HS)} if and only if {\sf(HS)} 
holds with $\phi=0$.  
This is true since $H\notin\HamCC(s_-,s_+)$, and hence there exists at most one $\phi\in[0,\pi)$ with 
$\int_{s_-}^{s_+}\xi_\phi^*H(t)\xi_\phi\DD t<\infty$. 
With the notation from \cref{D44} the condition {\sf(I)}\,$\wedge$\,{\sf(HS)} reads as
\[
	\int_{s_-}^{s_+} h_1(t)\DD t < \infty, \qquad \int_{s_-}^{s_+}\Bigl(\int_{s_-}^t h_2(s)\DD s\Bigr)h_1(t)\DD t < \infty
\]
for $H\in\HamCP(s_-,s_+)$ and analogously for $H\in\HamPC(s_-,s_+)$.

The condition {\sf(HS)} has a very clear operator theoretic meaning: it says that the resolvents of self-adjoint restrictions of 
$\Tmax(H)$ are of Hilbert--Schmidt class. 

\subsection{{\boldmath$H$}-polynomials}
\label{D154}

Assume that we have a Hamiltonian $H\in\HamCP(s_-,s_+)$. Then we define an integral operator $\mc I$ acting on the set of all
measurable functions $f\DF[s_-,s_+)\to\bb C^2$ such that $Hf$ is integrable on every interval $[s_-,a)$ where $a\in(s_-,s_+)$. 
We set
\[
	(\mc If)(t)\DE\int_{s_-}^t JH(s)f(s)\DD s\qquad\text{for }t\in[s_-,s_+).
\]
Note that the function $\mc If$ is always continuous. In particular, for each $f$ in the domain of $\mc I$, all iterates 
$\mc I^nf$ are well-defined. 

\begin{Definition}
\label{D10}
	Let $H\in\HamCP(s_-,s_+)$. We call every function of the form 
	\begin{equation}
	\label{D11}
		f=\sum_{k=0}^n\mc I^k\binom{\alpha_k}{\beta_k},
	\end{equation}
	where $n\in\bb N_0$ and $\alpha_k,\beta_k\in\bb C$, an \emph{$H$-polynomial}.

	Moreover, we denote the set of all $H$-polynomials by $\bb C^2[\mc I]$. 
\end{Definition}

\noindent
Note that, in general, neither the number $n$ nor the coefficients $\binom{\alpha_k}{\beta_k}$ 
in the representation \eqref{D11} are uniquely determined by the function $f$. 

\begin{Remark}
\label{D12}
	Caution!

	In \cref{D10} we deviate from the terminology used in \cite{kaltenbaeck.woracek:p4db}:
	there only those functions of the form \eqref{D11} that belong to $L^2(H)$ were called $H$-polynomials. 

	We believe that the above \cref{D10} is more natural.  In fact, it perfectly fits the analogy with usual polynomials:
	every polynomial $p\in\bb C[t]$ is nothing but a sum of iterates of the usual Volterra operator $\int_0^t f(s)\DD s$ 
	applied to some constants.
\end{Remark}

\noindent
Under the assumption that $H$ satisfies {\sf(I)} and {\sf(HS)} already `half of the' functions \eqref{D11} belong to $L^2(H)$. 
The following fact is shown in \cite[Corollary~3.5]{kaltenbaeck.woracek:p4db}.

\begin{Lemma}
\label{D13}
	Let $H\in\HamCP(s_-,s_+)$ and assume that $H$ satisfies {\sf(I)} and {\sf(HS)}. 
	Then there exists a unique sequence $(\rho_n)_{n=0}^\infty$ of numbers $\rho_n\in\bb C$ such that $\rho_0=0$ and 
	\[
		\forall n\geq 1\DP \mc I^n\binom 10+\sum_{k=0}^{n-1}\mc I^k\binom 0{\rho_{n-k}}\in L^2(H).
	\]
\end{Lemma}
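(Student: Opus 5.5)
The plan is an induction on $n$ that builds the functions in question one at a time. Put $f_0\DE\binom10$ and, for $n\ge1$,
\[
	f_n\DE\mc I^n\binom 10+\sum_{k=0}^{n-1}\mc I^k\binom 0{\rho_{n-k}} .
\]
Re-indexing the sum gives the recursion $f_n=\mc If_{n-1}+\binom0{\rho_n}$. So the statement is equivalent to the following: for each $n\ge1$, once $f_{n-1}\in L^2(H)$ is known, there is exactly one constant $\rho_n\in\bb C$ with $\mc If_{n-1}+\binom0{\rho_n}\in L^2(H)$. The base case is $f_0\in L^2(H)$, which is precisely condition {\sf(I)}.

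\emph{Uniqueness.} Suppose $\rho_1,\dots,\rho_{n-1}$ are already uniquely determined and two values of $\rho_n$ both work. Then their difference $d$ satisfies $\binom0d\in L^2(H)$, that is, $|d|^2\int_{s_-}^{s_+}h_2<\infty$. But $H\in\HamCP(s_-,s_+)$ means $H$ is not integrable near $s_+$. Since $h_1$ is integrable by {\sf(I)} and $|h_3|\le\sqrt{h_1h_2}$, this forces $\int h_2=\infty$. Hence $d=0$.

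\emph{Existence.} Given $g\DE f_{n-1}\in L^2(H)$, compute
\[
	(\mc Ig)(t)=\binom{-\int_{s_-}^t\xi_{\pi/2}^*Hg}{\int_{s_-}^t\xi_0^*Hg}.
\]
Because $\xi_0\in L^2(H)$, the Cauchy--Schwarz inequality in $L^2(H)$ shows that $\xi_0^*Hg$ is integrable on all of $(s_-,s_+)$. Choose
\[
	\rho_n\DE-\int_{s_-}^{s_+}\xi_0^*Hg .
\]
Then
\[
	f_n(t)=\binom{-\int_{s_-}^t\xi_{\pi/2}^*Hg}{-\int_t^{s_+}\xi_0^*Hg}=\int_{s_-}^{s_+}K(t,s)H(s)g(s)\DD s,
\]
where the kernel is $K(t,s)=-\xi_0\xi_{\pi/2}^*$ for $s<t$ and $K(t,s)=-\xi_{\pi/2}\xi_0^*$ for $s>t$. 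This is, up to sign, the resolvent kernel at $0$ of the self-adjoint restriction of $\Tmax(H)$ with boundary condition $\xi_0^*f(s_-)=0$. It is also exactly the kernel whose Hilbert--Schmidt property {\sf(HS)} encodes.

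The main step is to show that $g\mapsto\int K(\Dummy,s)H(s)g(s)\DD s$ maps $L^2(H)$ into itself. I would try the Hilbert--Schmidt bound
\[
	\iint \operatorname{tr}\bigl(H(t)^{1/2}K(t,s)H(s)K(t,s)^*H(t)^{1/2}\bigr)\DD s\DD t<\infty .
\]
For $s<t$ the integrand equals $h_1(t)h_2(s)$. For $s>t$ it equals $h_2(t)h_1(s)$, which after swapping the variables is the same expression. The total is therefore $2\int_{s_-}^{s_+}\bigl(\int_{s_-}^th_2\bigr)h_1(t)\DD t$, which is finite by {\sf(I)}\,$\wedge$\,{\sf(HS)} in the form stated after \cref{D9}. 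Cauchy--Schwarz then gives $\int f_n^*Hf_n\le C\,(g,g)_H$. Throughout one must check that the computation is independent of the representative of $g$ modulo $=_H$; it is, since only $Hg$ enters.

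Finally, membership in $L^2(H)$ also requires the condition on indivisible intervals. On an $H$-indivisible interval of type $\phi$ we have $f_n'=JHg\in\Span\{J\xi_\phi\}$ a.e., and $\xi_\phi^*J\xi_\phi=0$. Hence $\xi_\phi^*f_n$ is constant on that interval, so $f_n\in L^2(H)$, which completes the induction.
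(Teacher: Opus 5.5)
Your proof is correct and complete. The paper gives no argument of its own here; it quotes the lemma from \cite[Corollary~3.5]{kaltenbaeck.woracek:p4db}. Your proof is therefore a self-contained replacement for that citation.

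The structure is sound in each part:
\begin{enumerate}
\item The recursion $f_n=\mc If_{n-1}+\binom0{\rho_n}$ reduces the lemma to a one-step existence and uniqueness statement.
\item For uniqueness, (I) together with the limit point case at $s_+$ and $|h_3|\le\sqrt{h_1h_2}$ forces $\int h_2=\infty$, so no nonzero $\binom0d$ lies in $L^2(H)$.
\item For existence, the choice $\rho_n=-\int_{s_-}^{s_+}\xi_0^*Hg$ is well defined by Cauchy--Schwarz and (I). It makes $f_n$ the solution of $f'=JHg$ with $f(s_-)_1=0$. The weighted Hilbert--Schmidt integral of its kernel is computed correctly as $2\int_{s_-}^{s_+}\bigl(\int_{s_-}^t h_2\bigr)h_1(t)\,\mathrm{d}t$, and the pointwise Cauchy--Schwarz estimate then gives $\int f_n^*Hf_n\le C\,(g,g)_H$.
\item The check on $H$-indivisible intervals via $\xi_\phi^*J\xi_\phi=0$ is also right.
\end{enumerate}

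Compared with the bare citation, your route makes visible exactly where each hypothesis enters. (I) gives the base case and the integrability of $\xi_0^*Hg$. The limit point case at $s_+$ gives uniqueness. (HS) is used precisely as the Hilbert--Schmidt property of the resolvent, which is the operator-theoretic meaning the paper attaches to it after \cref{D9}.

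Two minor points you could make explicit. First, $Hg$ is integrable on each $[s_-,a)$ with $a<s_+$, by Cauchy--Schwarz and local integrability of $H$; this is what makes $\mc Ig$ defined. Second, identifying $K$ with a resolvent kernel is purely motivational. It would need extra justification (for instance, that $0$ lies in the resolvent set of that restriction), but your estimate does not use it, so it can be dropped.
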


\noindent
$H$-polynomials whose `leading coefficient' is $\binom 01$ and that belong to $L^2(H)$ may or may not exist.

\begin{Definition}
\label{D14}
	Let $H\in\HamCP(s_-,s_+)$ and assume that $H$ satisfies {\sf(I)} and {\sf(HS)}.  Then we set 
	\begin{align*}
		\Delta(H) &\DE \inf\Big\{n\in\bb N_0\DSB 
		\\
		&\hspace*{8ex} \exists\binom{\alpha_1}{\beta_1},\ldots,\binom{\alpha_n}{\beta_n}\in\bb C^2\DP
		\mc I^n\binom 01+\sum_{k=0}^{n-1}\mc I^k\binom{\alpha_{n-k}}{\beta_{n-k}}\in L^2(H)\Big\}.
	\end{align*}
	Here the infimum of the empty set is understood as $\infty$.
\end{Definition}

\noindent
Since $H\notin\HamCC(s_-,s_+)$, we always have $\Delta(H)\ge 1$. 

The following fact is shown in \cite[Section~3]{kaltenbaeck.woracek:p4db}.

\begin{Lemma}
\label{D15}
	Let $H\in\HamCP(s_-,s_+)$.  Assume that $H$ satisfies {\sf(I)} and {\sf(HS)}, and that $\Delta(H)<\infty$. 
	Then there exists a unique sequence $(\omega_n)_{n=0}^\infty$ of numbers $\omega_n\in\bb C$, such that $\omega_0=1$ and 
	\begin{equation}
	\label{D16}
		\forall n\geq\Delta(H)\DP \mc I^n\binom 01+\sum_{k=0}^{n-1}\mc I^k\binom 0{\omega_{n-k}}\in L^2(H).
	\end{equation}
\end{Lemma}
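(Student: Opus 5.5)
The plan is to build the sequence $(\omega_n)$ recursively, starting from an $L^2(H)$-element furnished by the definition of $\Delta(H)$, and to fix the free constants using the uniqueness argument behind \cref{D13}. Throughout write $\Delta\DE\Delta(H)$. The two facts used are:
\begin{Enumerate}
\item $\mc I$ maps suitable elements of $L^2(H)$ again to functions in $L^2(H)$ up to adding $H$-polynomials of lower order. This is where {\sf(I)} and {\sf(HS)} enter.
\item A nonzero constant $\binom{\alpha}{\beta}$ belongs to $L^2(H)$ only if $\beta=0$. Indeed, by {\sf(I)} the function $\binom10$ is in $L^2(H)$, while $\binom01\notin L^2(H)$ because $H\notin\HamCC(s_-,s_+)$ and so $\int h_2=\infty$.
\end{Enumerate}

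\textbf{Existence.} By definition of $\Delta$ there is
\[
	g_\Delta=\mc I^\Delta\binom01+\sum_{k<\Delta}\mc I^k\binom{\alpha_{\Delta-k}}{\beta_{\Delta-k}}\in L^2(H).
\]
The $\alpha$-parts are removed using \cref{D13}. For each $j\ge1$ the element $p_j\DE\mc I^j\binom10+\sum_{k<j}\mc I^k\binom0{\rho_{j-k}}$ lies in $L^2(H)$, and $\binom10\in L^2(H)$ as well. Subtracting $\alpha_{\Delta-k}$ times the appropriate $p_{k}$ (and $\alpha_\Delta\binom10$) turns every $\mc I^k\binom{\alpha}{\cdot}$ into a combination of terms $\mc I^{k'}\binom0{\cdot}$ with $k'<k$. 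Working from the highest $k$ downwards, we obtain an element of the required form with $\omega_0=1$ and only second-component coefficients. This settles $n=\Delta$.

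For $n>\Delta$ I would proceed inductively. Given $g_n\in L^2(H)$ of the required form, consider $\mc I g_n$. I expect to show that $\mc I g_n$ differs from an element of $L^2(H)$ by a constant $c$. The reason is that $\mc I$ applied to an $L^2(H)$ function $f$ produces $\int_{s_-}^t JHf$, and its first component behaves well under {\sf(HS)}: a Cauchy–Schwarz estimate bounds $\int\bigl(\int h_2\bigr)h_1$-type integrals, as in the proof of \cref{D13}. I would then adjust the constant $c$ via $\binom10\in L^2(H)$ and re-eliminate the first components as above. This produces $g_{n+1}$ in the required form.

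\textbf{Uniqueness, and the main obstacle.} I expect the inductive step to be the delicate part. We need $\mc I(L^2(H)\text{-element})$ to be in $L^2(H)$ modulo constants, which should follow from the Hilbert–Schmidt estimate underlying \cref{D13}. The remaining issue is consistency of the coefficients: the sequence $(\omega_n)$ must not depend on $n$, i.e.\ the coefficients found at stage $n+1$ must extend those found at stage $n$. Uniqueness gives this automatically. Suppose two elements of the form \eqref{D16} with the same $n$ lie in $L^2(H)$. Their difference is $\sum_{k<n}\mc I^k\binom0{\gamma_{n-k}}$, and it lies in $L^2(H)$. Let $k_0$ be the largest index with $\gamma_{n-k_0}\ne0$. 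If $k_0\ge\Delta$, we can subtract $\gamma_{n-k_0}\,g_{k_0}$ and reduce the top index; if $k_0<\Delta$, we contradict the minimality of $\Delta$. After all reductions the difference becomes a constant $\binom0\gamma\in L^2(H)$, which forces $\gamma=0$ by fact (ii). Hence the coefficients are uniquely determined and compatible across $n$, which yields the single sequence $(\omega_n)$.
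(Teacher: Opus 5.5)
The paper does not prove this lemma itself; it quotes it from \cite[Section~3]{kaltenbaeck.woracek:p4db}. So I judge your argument on its own merits.

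Your existence outline is sound. Normalising the element supplied by the definition of $\Delta(H)$ with the functions from \cref{D13} correctly produces $\omega_1,\dots,\omega_\Delta$, and passing from $g_n$ to $\mc I g_n$ minus a constant is the right idea.

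The genuine error is in your last paragraph. For fixed $n>\Delta$ the coefficients in \eqref{D16} are \emph{not} unique. If $g_{n-1}$ and $g_n$ lie in $L^2(H)$ and have the required form, so does $g_n+g_{n-1}$, and its coefficient at level $n-1$ is $\omega_1+1$ rather than $\omega_1$. Your reduction argument does not show otherwise. Once you subtract $\gamma_{n-k_0}g_{k_0}$ you are no longer looking at the original difference, so all you prove is that the difference lies in $\Span\{g_\Delta,\dots,g_{n-1}\}$. (The argument is valid only for $n=\Delta$, where $k_0<\Delta$ automatically.) Hence ``uniqueness gives consistency automatically'' rests on a false statement. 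As written, you also have no valid proof that the sequence $(\omega_n)$ is unique beyond $\omega_1,\dots,\omega_\Delta$.

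Both points are easily repaired, and consistency needs no uniqueness at all. For $g\in L^2(H)$ put $C\DE\int_{s_-}^{s_+}\binom10^*Hg$. This converges absolutely by {\sf(I)}, since $|\binom10^*Hg|\le h_1^{1/2}(g^*Hg)^{1/2}$. Then
\[
	(\mc Ig)(t)-\binom0C=\begin{pmatrix}-\int_{s_-}^t\binom01^*Hg\\[0.5ex] -\int_t^{s_+}\binom10^*Hg\end{pmatrix}.
\]
By Cauchy--Schwarz the two components are bounded in modulus by $\|g\|_H(\int_{s_-}^th_2)^{1/2}$ and $\|g\|_H(\int_t^{s_+}h_1)^{1/2}$ respectively. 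Using $H\le 2\,\mathrm{diag}(h_1,h_2)$ and Tonelli, both contributions to the $L^2(H)$-norm are finite by {\sf(HS)}. The condition on indivisible intervals holds because $\xi_\phi^*JHg=0$ there.

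So the constant has no first component, and the adjustment via $\binom10$ and the re-elimination you mention are unnecessary. Since $\mc Ig_n=\sum_{k=1}^{n+1}\mc I^k\binom0{\omega_{n+1-k}}$, the choice $\omega_{n+1}\DE-C$ (with $C$ computed for $g=g_n$) and $g_{n+1}\DE\mc Ig_n+\binom0{\omega_{n+1}}$ extends the coefficient list without changing $\omega_1,\dots,\omega_n$. Consistency is built into the recursion.

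For uniqueness, argue level by level. Let $(\gamma_n)$ be the difference of two admissible sequences. Then $\sum_{k<\Delta}\mc I^k\binom0{\gamma_{\Delta-k}}\in L^2(H)$, and minimality of $\Delta$ forces $\gamma_1=\dots=\gamma_\Delta=0$. For $n=\Delta+1,\Delta+2,\dots$ in turn, the difference at level $n$ then collapses to the constant $\binom0{\gamma_n}$. This lies in $L^2(H)$ only if $\gamma_n=0$.
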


\noindent
Note that, due to our definition that $\mf w_0=\binom 01$, we can write
\[
	\mc I^n\binom 01+\sum_{k=0}^{n-1}\mc I^k\binom 0{\omega_{n-k}}=\sum_{k=0}^n\mc I^k\binom 0{\omega_{n-k}}.
\]

\begin{Definition}
\label{D17}
	Let $H\in\HamCP(s_-,s_+)$. Assume that $H$ satisfies {\sf(I)} and {\sf(HS)}, and that $\Delta(H)<\infty$. 
	Let $(\omega_n)_{n=0}^\infty$ be the unique sequence from \cref{D15}.  Then, for every $n\in\bb N_0$, we set
	\begin{equation}\label{D62}
		\mf w_n\DE \mc I^n\binom 01+\sum_{k=0}^{n-1}\mc I^k\binom 0{\omega_{n-k}}.
	\end{equation}
	Moreover, we set $\mf w_{-1}\DE0$.
\end{Definition}

\begin{Remark}
\label{D63}
\rule{0ex}{1ex}
\begin{Enumerate}
\item
	It follows from \eqref{D16} and the definition of $\mc I$ that
	\begin{equation}
	\label{D64}
		\mf w_n'=JH\mf w_{n-1}, \quad \mf w_n(s_-)=\binom{0}{\omega_n}, \qquad n\in\bb N_0.
	\end{equation}
\item
	When $H$ is diagonal on $(s_-,s_+)$, the functions $\mf w_n$ can be determined more explicitly;
	see \cite[Theorem~3.7]{winkler.woracek:del}\footnote{%
		In \cite[(3.8)]{winkler.woracek:del} a minus sign is missing in the formula for $\mf w_{2n+1}$.}.
	Define scalar functions ${\sf w}_n$, $n\in\bb N_0$ by
	\begin{align}
		\sf w_0(t) &= 1,
		\label{D147}
		\\[1ex]
		{\sf w}_{n+1}(t) &= 
		\begin{cases}
			-\int_{s_-}^t h_2(s){\sf w}_n(s)\DD s & \text{if $n$ is even},
			\\[2ex]
			-\int_t^{s_+} h_1(s){\sf w}_n(s)\DD s & \text{if $n$ is odd};
		\end{cases}
		\label{D148}
	\end{align}
	then
	\begin{equation}\label{D149}
		\mf w_n(t) = 
		\begin{cases}
			\binom{0}{{\sf w}_n(t)} & \text{if $n$ is even},
			\\[1ex]
			\binom{{\sf w}_n(t)}{0} & \text{if $n$ is odd}.
		\end{cases}
	\end{equation}
\end{Enumerate}
\end{Remark}

\begin{Remark}
\label{D18}
	If $H\in\HamPC(s_-,s_+)$, we can do the same as elaborated above, simply by exchanging the
	roles of left and right endpoints. Technically this can be done either 
	by applying the above to the Hamiltonian $(t\mapsto H(-t))\in\HamCP(-s_+,-s_-)$, or by considering the integral operator 
	\[
		(\tilde{\mc I}f)(t)\DE-\int_t^{s_+} JH(s)f(s)\DD s\qquad\text{for }t\in(s_-,s_+].
	\]
	and defining the number $\Delta(H)$ and the sequences $(\rho_n)_{n=0}^\infty$ and $(\omega_n)_{n=0}^\infty$
	in the analogous way.

	Let us make this explicit.  Let $H\in\HamPC(s_-,s_+)$ and assume that $H$ satisfies {\sf(I)} and {\sf(HS)}.
	There exists a unique sequence $(\rho_n)_{n=0}^\infty$ of numbers $\rho_n\in\bb C$, such that $\rho_0=0$ and 
	\[
		\forall n\geq 1\DP \tilde{\mc I}^n\binom 10+\sum_{k=0}^{n-1}\tilde{\mc I}^k\binom 0{\rho_{n-k}}\in L^2(H).
	\]
	We set
	\begin{align*}
		\Delta(H) &\DE \inf\Big\{n\in\bb N_0\DSB 
		\\
		&\hspace*{8ex} \exists\binom{\alpha_1}{\beta_1},\ldots,\binom{\alpha_n}{\beta_n}\in\bb C^2\DP
		\tilde{\mc I}^n\binom 01+\sum_{k=0}^{n-1}\tilde{\mc I}^k\binom{\alpha_{n-k}}{\beta_{n-k}}\in L^2(H)\Big\}.
	\end{align*}
	Assume that $\Delta(H)<\infty$. 
	Then there exists a unique sequence $(\omega_n)_{n=0}^\infty$ of numbers $\omega_n\in\bb C$ such that $\omega_0=1$ and 
	\[
		\forall n\geq\Delta(H)\DP \tilde{\mc I}^n\binom 01+\sum_{k=0}^{n-1}\tilde{\mc I}^k\binom 0{\omega_{n-k}}
		\in L^2(H),
	\]
	and we set 
	\[
		\mf w_n\DE \tilde{\mc I}^n\binom 01+\sum_{k=0}^{n-1}\tilde{\mc I}^k\binom 0{\omega_{n-k}}.
	\]
	Further, the relations in \eqref{D147}--\eqref{D149} still hold if $s_-$ is replaced by $s_+$.
\end{Remark}

%
\section{Reminder about canonical systems II. \\ The indefinite case}
\label{D155}
%

In this section we recall definitions and facts about sign-indefinite canonical systems from 
\cite{kaltenbaeck.woracek:p4db}, and give an alternative form of the operator model of an elementary indefinite Hamiltonian of
kind (A) which is similar to \cite[Theorem~2.15]{langer.woracek:esmod}.

\subsection{Elementary indefinite Hamiltonians of kind (A)}
\label{D156}

We introduce the objects that are the essential building blocks of indefinite Hamiltonians and capture `non-trivial'
singularities.

\begin{Definition}
\label{D31}
	An \emph{elementary indefinite Hamiltonian of kind \textup{(A)}}, $\mf h$, is a tuple consisting of data (i)--(iii):
	\begin{Enumerate}
	\item 
		Two finite and non-empty intervals $(s_-,\sigma)$ and $(\sigma,s_+)$, and a Hamiltonian on each of them:
		\[
			H_-\in\HamCP(s_-,\sigma),\quad H_+\in\HamPC(\sigma,s_+).
		\]
		These Hamiltonians are assumed to have the following properties:
		\begin{Itemize}
		\item 
			$H_-$ and $H_+$ satisfy {\sf(I)} and {\sf(HS)}, and 
			\[
				\Delta(H_-)<\infty,\quad \Delta(H_+)<\infty;
			\]
		\item
			for every $s\in[s_-,\sigma)$ the interval $(s,\sigma)$ is not $H_-$-indivisible, 
			or, for every $s\in(\sigma,s_+)$ the interval $(\sigma,s)$ is not $H_+$-indivisible.
		\end{Itemize}
		We write $H\DE(H_-,H_+)$ and $\Delta(H)\DE\max\{\Delta(H_-),\Delta(H_+)\}$.
	\item 
		Numbers $d_0,\ldots,d_{2\Delta(H)-1}\in\bb R$.
	\item 
		A number $\oe\in\bb N_0$ and, if $\oe>0$, numbers $b_1,\ldots,b_\oe\in\bb R$ with $b_1\neq 0$.
	\end{Enumerate}
	If we are given data as above, we write $\mf h=\langle H;\oe,b_j;d_j\rangle$. 
\end{Definition}

\noindent
Let us provide an intuition for this definition. The `function' $H$ is a `cc-Hamiltonian' on $(s_-,s_+)$ that has an inner
singularity at $\sigma$ (limit point at $\sigma$). The `growth' of $H$ towards this singularity is not too fast (limited by
{\sf(HS)} and $\Delta(H)<\infty$), and the singular behaviour of $H$ appears
only one-dimensionally (the direction $\binom 10$ remains integrable by 
{\sf(I)}). The numbers $\oe,b_j$ quantify a contribution to the differential equation that happens inside the singularity
$\sigma$, and the numbers $d_j$ quantify the interaction of the singularity with $H$ in the vicinity of $\sigma$.

In \cite[\S5]{kaltenbaeck.woracek:p5db} an analogue of the fundamental solution and the monodromy matrix is constructed for indefinite
canonical systems.  Here we only introduce a notation: for an elementary indefinite Hamiltonian $\mf h$ of kind \textup{(A)}
we denote by $W_{\mf h}(t,z)$, $t\in[s_-,\sigma)\cup(\sigma,s_+]$ its \emph{maximal chain of matrices} (the analogue of the fundamental
solution in the positive case), and by $W_{\mf h}$ its \emph{monodromy matrix}. To make the connection to the notation in 
\cite[Theorem~5.1]{kaltenbaeck.woracek:p5db}: there the maximal chain is denoted by $\omega_{\mf h}$ and the monodromy matrix by
$\omega(\mf B(\mf h))$. As already mentioned in the introduction, the monodromy matrix of $\mf h$ (as well as every element of the
maximal chain) is a matrix function in the class $\mc M_{<\infty}$.

\subsection{The operator model}
\label{D157}

Given an elementary indefinite Hamiltonian $\mf h$ of kind (A) one can define an operator model sharing many properties of the
model constructed for a positive Hamiltonian $H\in\HamCC(s_-,s_+)$. It is built from the operator models for $H_\pm$ and an
additional contribution coming from the singularity. This additional contribution is a finite-dimensional part that 
interacts with the $L^2$-spaces surrounding it.
Instead of presenting the original form of the model as constructed in \cite{kaltenbaeck.woracek:p4db}, we give an isomorphic
description similar to \cite[Theorem~2.15]{langer.woracek:esmod}.

Throughout the following we fix an elementary indefinite Hamiltonian $\mf h=\langle H;\oe,b_j;d_j\rangle$ of kind (A) and write 
$\Delta\DE\Delta(H)$.

\begin{Definition}\label{D75}
	Set
	\[
		L^2(H)\DE L^2(H_+)\oplus L^2(H_-), \qquad \Tmax(H)\DE\Tmax(H_+)\oplus\Tmax(H_-)
	\]
	and let $(\omega_n^-)_{n=0}^\infty$ and $(\omega_n^+)_{n=0}^\infty$ be the unique sequences from \cref{D15}
	corresponding to the intervals $(s_-,\sigma)$ and $(\sigma,s_+)$ respectively.
	Moreover, let $\mf w_n$, $n\in\bb N_0$, be the functions defined on $[s_-,\sigma)\cup(\sigma,s_+]$
	whose restrictions to $[s_-,\sigma)$ and $(\sigma,s_+]$ coincide with those from \cref{D17,D18}.
\end{Definition}

\begin{Remark}
\label{D32}
	By the definition of $\Delta$ we know that $\{\mf w_0,\ldots,\mf w_{\Delta-1}\}$ is linearly independent modulo
	$L^2(H)$. Since at least one of the intervals $(s_-,\sigma)$ and $(\sigma,s_+)$ is not indivisible, we obtain from 
	\cite[Lemma~3.11]{kaltenbaeck.woracek:p4db} that $\{\mf w_0,\ldots,\mf w_{\Delta}\}$ is linearly independent modulo
	$\Dom\Tmax(H)$. 
\end{Remark}

\noindent
The model space, which we call $\mrr{\mf P}(\mf h)$, is a space of functions together with a finite-dimensional part.

\begin{Definition}
\label{D33}
	\phantom{}
	\begin{Enumerate}
	\item 
		We set 
		\begin{equation}
		\label{D34}
			L^2_\Delta(H)\DE L^2(H)\dot+\Span\{\mf w_0,\ldots,\mf w_{\Delta-1}\}
		\end{equation}
		and  
		\[
			\mrr{\mf P}(\mf h)\DE L^2_\Delta(H)\times\bb C^{\Delta}\times\bb C^\oe.
		\]
		Elements of $\bb C^{\Delta}$ are generically denoted by $\upxi=(\xi_i)_{i=0}^{\Delta-1}$, and elements of 
		$\bb C^\oe$ by $\upalpha=(\alpha_k)_{k=1}^\oe$.
	\item 
		If $\oe>0$, let $c_1,\ldots,c_\oe$ be the unique numbers such that
		\[
			(c_1,\ldots,c_\oe)
				\begin{pmatrix}
						b_1 & \cdots & b_\oe\\
						 \vdots & \ddots & \vdots \\
						0 & \cdots & b_1
				\end{pmatrix}
			=(-1,0,\ldots,0).
		\]
		Given $(f,\upxi,\upalpha),(g,\upeta,\upbeta)\in\mrr{\mf P}(\mf h)$, we write the unique decompositions of $f$ and $g$ 
		according to the sum and span in \eqref{D34} as 
		\begin{equation}
		\label{D35}
			f=\tilde f+\sum_{i=0}^{\Delta-1}\lambda_i\mf w_i, \qquad 
			g=\tilde g+\sum_{i=0}^{\Delta-1}\mu_i\mf w_i
		\end{equation}
		with $\tilde f,\tilde g\in L^2(H)$ and $\lambda_i,\mu_i\in\bb C$, and define an inner product 
		\[
			\big[(f,\upxi,\upalpha),(g,\upeta,\upbeta)\big] 
			\DE (\tilde f,\tilde g)_H + \sum_{i=0}^{\Delta-1}\lambda_i\ov{\eta_i}
			+ \sum_{i=0}^{\Delta-1}\xi_i\ov{\mu_i} + \sum_{k,l=1}^\oe c_{k+l-\oe}\alpha_k\ov{\beta_l}.
		\]
	\end{Enumerate}
\end{Definition}

\noindent
The space $\mrr{\mf P}(\mf h)$ can be identified with the model space $\mc P(\mf h)$ constructed in 
\cite[\S4.2]{kaltenbaeck.woracek:p4db}.  This is seen using the map $\iota$ from \cite[(4.10)]{kaltenbaeck.woracek:p4db}.

\begin{Definition}
\label{D40}
	We define a map $\mrr\iota\DF\mc P(\mf h)\to\mrr{\mf P}(\mf h)$ as follows. Assume $x\in\mc P(\mf h)$ is given, and write 
	\[
		\iota(x) =
		\Bigl(f,(\xi_i)_{i=0}^{\Delta-1},(\lambda_i)_{i=0}^{\Delta-1},\sum_{k=\Delta}^{\Delta+\oe-1}\alpha_k\delta_k\Bigr).
	\]
	Then we set 
	\[
		\mrr\iota(x) \DE \Bigl(f+\sum_{i=0}^{\Delta-1}\lambda_i\mf w_i,(\xi_i)_{i=0}^{\Delta-1},
		(\alpha_{k-1+\Delta})_{k=1}^\oe\Bigr).
	\]
\end{Definition}

\noindent
By the construction in \cite[pp.~758--760]{kaltenbaeck.woracek:p4db}, the map $\mrr\iota$ is an isometric isomorphism.

Using $\mrr\iota$ we transport the model relation $T(\mf h)$ and the boundary mapping $\Gamma(\mf h)$, which are constructed in
\cite[\S4.2]{kaltenbaeck.woracek:p4db}, to the space $\mrr{\mf P}(\mf h)$. 

\begin{Definition}
\label{D36}
	With the notation from above we set 
	\begin{align*}
		\mrr T(\mf h) &\DE (\mrr\iota\times\mrr\iota)(T(\mf h))\subseteq\mrr{\mf P}(\mf h)\times\mrr{\mf P}(\mf h),
		\\[0.5ex]
		\mrr\Gamma(\mf h) &\DE \Gamma(\mf h)\circ(\mrr\iota\times\mrr\iota)^{-1}|_{\mrr T(\mf h)}
		\DF\mrr T(\mf h)\to\bb C^2\times\bb C^2.
	\end{align*}
\end{Definition}

\noindent
The relation $\mrr T(\mf h)$ and the mapping $\mrr\Gamma(\mf h)$ are finite-dimensional perturbations of $\Tmax(H)$ and 
$\Gamma(H)$ as the following theorem shows.

\begin{theorem}
\label{D38}
	Let $F=(f,\upxi,\upalpha),G=(g,\upeta,\upbeta)\in\mrr{\mf P}(\mf h)$.  Then we have 
	\[
		(F;G) \in \mrr T(\mf h)
	\]
	if and only if the following relations \textup{(i)}--\textup{(v)} hold. 
	\begin{Enumerate}
	\item 
		We have $f=f_0+\sum_{i=0}^\Delta\lambda_i\mf w_i$ with $f_0\in\Dom\Tmax(H)$, 
		$\lambda_0,\ldots,\lambda_\Delta\in\bb C$, and 
		\begin{equation}\label{D73}
			\Big(f_0;g-\sum_{i=0}^{\Delta-1}\lambda_{i+1}\mf w_i\Big)\in\Tmax(H).
		\end{equation}
	\item
		If $\oe=0$, then 
		\begin{align*}
			\xi_{\Delta-1}
			&=  
			\int_{s_-}^{s_+}\mf w_\Delta^*H\Bigl(g-\sum_{i=0}^{\Delta-1}\lambda_{i+1}\mf w_i\Bigr)
			+ \frac 12\sum_{i=0}^{\Delta-1}d_{\Delta-1+i}\lambda_i+d_{2\Delta-1}\lambda_\Delta
			\\
			&\quad +
			\begin{cases}
				-\omega_\Delta^-f(s_-)_1 \CAS (\sigma,s_+)\text{ is indivisible},
				\\[0.5ex]
				\omega_\Delta^+f(s_+)_1 \CAS (s_-,\sigma)\text{ is indivisible},
				\\[0.5ex]
				\omega_\Delta^+f(s_+)_1-\omega_\Delta^-f(s_-)_1 \CASO.
			\end{cases}
		\end{align*}
		If $\oe>0$, then $\alpha_\oe=b_1\lambda_\Delta$.
	\item
		If neither $(s_-,\sigma)$ nor $(\sigma,s_+)$ is indivisible, then 
		\[
			\eta_0=f(s_-)_1-f(s_+)_1-\frac 12\sum_{i=0}^{\Delta-1}d_i\lambda_{i+1}.
		\]
	\item
		For $k\in\{1,\ldots,\Delta-1\}$ we have 
		\begin{align*}
			\eta_k 
			&= \xi_{k-1}-\frac 12 d_{k-1}\lambda_0-\frac 12 d_{k+\Delta-1}\lambda_\Delta
			\\
			&\quad +
			\begin{cases}
				\omega_k^-f(s_-)_1 \CAS (\sigma,s_+)\text{ is indivisible},
				\\[0.5ex]
				-\omega_k^+f(s_+)_1 \CAS (s_-,\sigma)\text{ is indivisible},
				\\[0.5ex]
				\omega_k^-f(s_-)_1-\omega_k^+f(s_+)_1 \CASO.
			\end{cases}
		\end{align*}
	\item
		If $\oe>0$, then 
		\begin{align*}
			\beta_1 &=  
			\int_{s_-}^{s_+}\mf w_\Delta^*H\Big(g-\sum_{i=0}^{\Delta-1}\lambda_{i+1}\mf w_i\Big)
			+ \frac 12\sum_{i=0}^{\Delta-1}d_{\Delta-1+i}\lambda_i+d_{2\Delta-1}\lambda_\Delta-\xi_{\Delta-1}
			\\
			&\quad +
			\begin{cases}
				-\omega_\Delta^-f(s_-)_1 \CAS (\sigma,s_+)\text{ is indivisible},
				\\[0.5ex]
				\omega_\Delta^+f(s_+)_1 \CAS (s_-,\sigma)\text{ is indivisible},
				\\[0.5ex]
				\omega_\Delta^+f(s_+)_1-\omega_\Delta^-f(s_-)_1 \CASO,
			\end{cases}
		\end{align*}
		and 
		\[
			\beta_j=\alpha_{j-1}-b_{\oe+2-j}\lambda_\Delta  \qquad \text{for } j=2,\ldots,\oe.
		\]
	\end{Enumerate}
	Assume that $(F;G)\in\mrr T(\mf h)$.  Then 
	\begin{multline}
		\mrr\Gamma(\mf h)(F;G)(s_-)
		\\
		=
		\begin{cases}
			\begin{pmatrix}f(s_+)_1+\eta_0+\frac 12\sum_{i=0}^{\Delta-1}d_i\lambda_{i+1} \\ \lambda_0\end{pmatrix}
			\CAS (s_-,\sigma)\text{ indivisible},
			\\[6mm]
			f(s_-) \CASO
		\end{cases}
		\label{D60}
	\end{multline}
	and 
	\begin{multline}
		\mrr\Gamma(\mf h)(F;G)(s_+)
		\\
		=
		\begin{cases}
			\begin{pmatrix}f(s_-)_1-\eta_0-\frac 12\sum_{i=0}^{\Delta-1}d_i\lambda_{i+1} \\ \lambda_0\end{pmatrix}
			\CAS (\sigma,s_+)\text{ indivisible},
			\\[6mm]
			f(s_+) \CASO.
		\end{cases}
		\label{D61}
	\end{multline}
\end{theorem}

\begin{Remark}
\label{D39}
	\phantom{}
	\begin{Enumerate}
	\item
		In the space $L^2_\Delta(H)$ we have the natural maximal differential operator
		\begin{align*}
			\TDmax(H) &\DE \Big\{(f,g)\in L_\Delta^2(H) \times L_\Delta^2(H)\DSB 
			\\
			&\hspace*{8ex} \exists\hat f\text{ locally a.c.}\DP \hat f/_{=_H}=f \;\wedge\; \hat f'=JHg\text{ a.e.}\Big\}.
		\end{align*}
		Denote by $\pi$ the projection from $\mrr{\mf P}(\mf h)$ onto $L^2_\Delta(H)$, i.e.\ $\pi((f,\upxi,\upalpha))\DE f$.
		Then we have 
		\[
			(\pi\times\pi)(\mrr T(\mf h))=\TDmax(H).
		\]
		Hence, $\mrr T(\mf h)$ can also be considered as a finite-dimensional perturbation of $\TDmax(H)$
		as can be seen from \eqref{D73}.
		Note that the condition in \Cref{D38}\,(ii) can be seen as a constraint for the domain of $\mrr T(\mf h)$
		whereas (iii)--(v), together with (i), correspond to the action of the operator part.
	\item
		The mapping $\mrr\Gamma(\mf h)\DF\mrr T(\mf h)\to(\bb C^2)^{\{s_-,s_+\}}$ is a boundary mapping 
		so that $\bigl(\mrr{\mf P}(\mf h),\mrr T(\mf h),\mrr\Gamma(\mf h)\bigr)$ becomes a boundary triple
		in the sense of \cite[Definition~2.7]{kaltenbaeck.woracek:p4db};
		note that the boundary mappings have trivial multi-valued part by \cite[Lemma~4.19]{kaltenbaeck.woracek:p4db}.
		In particular, the following abstract Green identity holds: if $(f;g),(u,v)\in\mrr T(\mf h)$, then
		\begin{equation}\label{D77}
		\begin{aligned}
			\relax
			[g,u]-[f,v] &= \mrr\Gamma(\mf h)(u;v)(s_-)^*J\mrr\Gamma(\mf h)(f;g)(s_-)
			\\[0.5ex]
			&\quad - \mrr\Gamma(\mf h)(u;v)(s_+)^*J\mrr\Gamma(\mf h)(f;g)(s_+).
		\end{aligned}
		\end{equation}
		Further, it follows from \cite[Theorem~5.1]{kaltenbaeck.woracek:p4db}
		that the boundary triple has defect 2 and property (E) (see \cite[Definitions~2.8 and 2.16]{kaltenbaeck.woracek:p4db});
		hence, for every $z\in\bb C$ and every $c\in\bb C^2$ there exist unique $F,G\in\ker\bigl(\mrr T(\mf h)-z\bigr)$
		such that $\mrr\Gamma(\mf h)(F;zF)(s_-)=c$ and $\mrr\Gamma(\mf h)(G;zG)(s_+)=c$.
	\end{Enumerate}
\end{Remark}

\noindent
We come to the proof of \cref{D38}.  It is carried out using two ingredients, namely the definition of $T(\mf h)$ as a 
direct sum in \cite[Definition~4.11]{kaltenbaeck.woracek:p4db} and the abstract Green identity \eqref{D77}, where inner products are 
evaluated in $\mrr{\mf P}(\mf h)$.   Here we use, without further comment, the notation from \cite{kaltenbaeck.woracek:p4db}; 
in particular, we set $c_j=0$ for $j\notin\{1,\ldots,\oe\}$ and $\mf b\DE\sum_{k=\Delta}^{\Delta+\oe-1}b_{\Delta+\oe-k}\delta_k$.

\begin{proof}[Proof of \cref{D38}]
	We start with the proof of the forward implication.  Hence, assume that we have an element $(x;y)\in T(\mf h)$ and set 
	\[
		(f,\upxi,\upalpha)\DE\mrr\iota(x), \qquad (g,\upeta,\upbeta)\DE\mrr\iota(y);
	\]
	the task is to prove that \textup{(i)}--\textup{(v)} hold.  On the way we also establish 
	the asserted formulae \eqref{D60}, \eqref{D61} for the boundary values.

	We decompose $(x;y)$ according to \cite[Definition~4.11]{kaltenbaeck.woracek:p4db} as 
	\begin{align}
		\nonumber
		(x;y) &= (x';y')+\kappa_+\big(\chi_+\binom 10;\delta_0\big)+\kappa_-\big(\chi_-\binom 10;-\delta_0\big)
		\\
		\nonumber
		&\quad +\sigma_0(p_0;0)+\sum_{k=1}^{\Delta-1}\sigma_k(p_k;p_{k-1}+d_{k-1}\delta_0)
		+\sigma_\Delta(\mf w_\Delta+\mf b;p_{\Delta-1}+d_{\Delta-1}\delta_0)
		\\
		\label{D37}
		&\quad +\sum_{k=\Delta}^{\Delta+\oe-1}\tau_k(\delta_{k-1};\delta_k).
	\end{align}
	Applying the function $\psi(\mf h)$ we obtain 
	\begin{align*}
		f &= \big[\psi(\mf h)\circ\mrr\iota^{-1}\big](f,\upxi,\upalpha)=\psi(\mf h)(x)
		\\
		&= \big[x'+\kappa_+\chi_+\binom 10+\kappa_-\chi_-\binom 10\big]+\sum_{k=0}^\Delta\sigma_k\mf w_k,
		\\
		g &= \big[\psi(\mf h)\circ\mrr\iota^{-1}\big](g,\upeta,\upbeta)=\psi(\mf h)(y)
		= y'+\sum_{k=1}^\Delta\sigma_k\mf w_{k-1}.
	\end{align*}
	Since 
	\[
		f_0\DE x'+\kappa_+\chi_+\binom 10+\kappa_-\chi_-\binom 10\in\Dom\Tmax(H), 
	\]
	the function $f$ is decomposed as required in (i), namely with the function $f_0$ and the constants 
	$\lambda_i\DE\sigma_i$, $i=0,\ldots,\Delta$. Moreover, since $(x';y')\in\Tmax(H)$, we have 
	\[
		f'=JHg,\quad f_0'=JH\Big(g-\sum_{k=1}^\Delta\sigma_k\mf w_{k-1}\Big).
	\]
	This proves (i).  For later use we also observe that the function $g$ is decomposed as in \eqref{D35} with the constants 
	\[
		\mu_i\DE\sigma_{i+1}=\lambda_{i+1} \qquad\text{for }i=0,\ldots,\Delta-1.
	\]
	Consider now the case when $\oe>0$, so that the parameters $\alpha_k,\beta_k$ are actually present.
	Comparing coefficients we obtain
	\begin{alignat*}{2}
		& \alpha_{k+1-\Delta}=\tau_{k+1}+\sigma_\Delta b_{\Delta+\oe-k} \qquad &&\text{for }k=\Delta,\ldots,\Delta+\oe-2,
		\\[1ex]
		& \alpha_\oe=\sigma_\Delta b_1,
		\\[1ex]
		& \beta_{k+1-\Delta}=\tau_k \qquad &&\text{for }k=\Delta,\ldots,\Delta_\oe-1.
	\end{alignat*}
	It follows that 
	\[
		\beta_j=\alpha_{j-1}-\lambda_\Delta b_{\oe+2-j} \qquad\text{for }j=2,\ldots,\oe.
	\]
	This proves the second formula in (ii) and the second formula in (v).

	To establish the formulae involving $\xi_i$ and $\eta_i$, we apply Green's identity 
	with $(x;y)$ and various other elements in $T(\mf h)$. 
	\begin{Ilist}
	\item 
		With $(p_0;0)$ we obtain:
		\begin{align*}
			& \Gamma(\mf h)(x;y)(s_-)_1-\Gamma(\mf h)(x;y)(s_+)_1
			\\[0.5ex]
			&= -\binom 01^*J\Gamma(\mf h)(x;y)\Big|_{s_-}^{s_+}
			= -\Gamma(\mf h)(p_0;0)^*J\Gamma(\mf h)(x;y)\Big|_{s_-}^{s_+}
			\\[0.5ex]
			&= [y,p_0]-[x,0] = \Bigl[(g,\upeta,\upbeta),\Bigl(\mf w_0,\Bigl(\frac 12d_i\Bigr)_{i=0}^{\Delta-1},0\Bigr)\Bigr]
			\\[0.5ex]
			&= \eta_0+\frac 12\sum_{i=0}^{\Delta-1}d_i\lambda_{i+1}.
		\end{align*}
		If $(s_-,\sigma)$ is not indivisible, then 
		\[
			\Gamma(\mf h)(x;y)(s_-)=f(s_-).
		\]
		Analogously, if $(\sigma,s_+)$ is not indivisible, then 
		\[
			\Gamma(\mf h)(x;y)(s_+)=f(s_+).
		\]
		Hence the relation in (iii) follows.  Moreover, the formula for boundary values in the respective non-indivisible cases
		follows. 

		We also obtain the formula for the upper component of boundary values in the respective indivisible cases. 
		If $(s_-,\sigma)$ is indivisible, then 
		\begin{align*}
			\Gamma(\mf h)(x;y)(s_-)_1 
			&= \Gamma(\mf h)(x;y)(s_+)_1+\eta_0+\frac 12\sum_{i=0}^{\Delta-1}d_i\lambda_{i+1}
			\\
			&= f(s_+)_1+\eta_0+\frac 12\sum_{i=0}^{\Delta-1}d_i\lambda_{i+1}.
		\end{align*}
		Analogously, if $(\sigma,s_+)$ is indivisible, then 
		\begin{align*}
			\Gamma(\mf h)(x;y)(s_+)_1
			&= \Gamma(\mf h)(x;y)(s_-)_1-\eta_0-\frac 12\sum_{i=0}^{\Delta-1}d_i\lambda_{i+1}
			\\
			&= f(s_-)_1-\eta_0-\frac 12\sum_{i=0}^{\Delta-1}d_i\lambda_{i+1}.
		\end{align*}
	\item 
		With $(p_k;p_{k-1}+d_{k-1}\delta_0)$ for $k\in\{1,\ldots,\Delta-1\}$ we obtain:
		\begin{align*}
			& \omega_k^-\Gamma(\mf h)(x;y)(s_-)_1-\omega_k^+\Gamma(\mf h)(x;y)(s_+)_1
			\\[0.5ex]
			&= -\Gamma(\mf h)(p_k;p_{k-1}+d_{k-1}\delta_0)^*J\Gamma(\mf h)(x;y)\Big|_{s_-}^{s_+}
			\\[0.5ex]
			&= [y,p_k]-[x,p_{k-1}+d_{k-1}\delta_0]
			= \Bigl[(g,\upeta,\upbeta),\Bigl(\mf w_k,\Bigl(\frac 12d_{k+i}\Bigr)_{i=0}^{\Delta-1},0\Bigr)\Bigr]
			\\[0.5ex]
			&\quad -\Bigl[(f,\upxi,\upalpha),\Bigl(\mf w_{k-1},\Bigl(\frac 12d_{k-1+i}\Bigr)_{i=0}^{\Delta-1}+
			d_{k-1}\upvarepsilon_0,0\Bigr)\Bigr]
			\\[0.5ex]
			&= \eta_k+\frac 12\sum_{i=0}^{\Delta-1}d_{k+i}\lambda_{i+1}-\xi_{k-1}
			-\frac 12\sum_{i=0}^{\Delta-1}d_{k-1+i}\lambda_i+d_{k-1}\lambda_0
			\\[0.5ex]
			&= \eta_k+\frac 12d_{k-1+\Delta}\lambda_\Delta-\xi_{k-1}+\frac 12d_{k-1}\lambda_0.
		\end{align*}
		This yields (iv).  Remember here that $\omega_k^-=0$ if $(s_-,\sigma)$ is indivisible, and that 
		$\omega_k^+=0$ if $(\sigma,s_+)$ is indivisible. 
	\item 
		With $(\mf w_\Delta+\mf b;p_{\Delta-1}+d_{\Delta-1}\delta_0)$ we obtain:
		\begin{align*}
			& \omega_\Delta^-\Gamma(\mf h)(x;y)(s_-)_1-\omega_\Delta^+\Gamma(\mf h)(x;y)(s_+)_1
			\\[0.5ex]
			&= -\Gamma(\mf h)(\mf w_\Delta+\mf b;p_{\Delta-1}+d_{\Delta-1}\delta_0)^*J
			\Gamma(\mf h)(x;y)\Big|_{s_-}^{s_+}
			\\[0.5ex]
			&= [y,\mf w_\Delta+\mf b]-[x,p_{\Delta-1}+d_{\Delta-1}\delta_0]
			\\[0.5ex]
			&= \Bigl[(g,\upeta,\upbeta),\Bigl(\mf w_\Delta,\Bigl(\frac 12d_{\Delta+i}\Bigr)_{i=0}^{\Delta-1},
			(b_{\oe+1-k})_{k=1}^\oe\Bigr)\Bigr]
			\\[0.5ex]
			&\quad -\Bigl[(f,\upxi,\upalpha),(\mf w_{k-1},\Bigl(\frac 12d_{k-1+i}\Bigr)_{i=0}^{\Delta-1}+d_{k-1}\upvarepsilon_0,0\Bigr)\Bigr]
			\\[0.5ex]
			&= \Big(g-\sum_{i=0}^{\Delta-1}\lambda_{i+1}\mf w_i,\mf w_\Delta\Big)_H
			+\frac 12\sum_{i=0}^{\Delta-1}d_{\Delta+i}\lambda_{i+1}
			\\[0.5ex]
			&\quad +
			\begin{cases}
				0 \CAS \oe=0,
				\\
				\sum_{k,l=1}^\oe c_{k+l-\oe}\beta_k b_{\oe+1-l} \CAS \oe>0,
			\end{cases}
			\\[0.5ex]
			&\quad -\xi_{\Delta-1}-\frac 12\sum_{i=0}^{\Delta-1}d_{\Delta-1+i}\lambda_i+d_{\Delta-1}\lambda_0
			\\[0.5ex]
			&= \int\limits_{s_-}^{s_+}\mf w_\Delta^*H\Big(g-\sum_{i=0}^{\Delta-1}\lambda_{i+1}\mf w_i\Big)
			+\frac 12\sum_{i=0}^{\Delta-1}d_{\Delta-1+i}\lambda_i+d_{2\Delta-1}\lambda_\Delta-\xi_{\Delta-1}
			\\[0.5ex]
			&\quad +
			\begin{cases}
				0 \CAS \oe=0,
				\\
				-\beta_1 \CAS \oe>0.
			\end{cases}
		\end{align*}
		Note here that, by the definition of $c_j$, we have 
		\[
			\sum_{l=1}^\oe c_{k+l-\oe}b_{\oe+1-l}=\sum_{j=1}^k c_jb_{k+1-j}=
			\begin{cases}
				-1 \CAS k=1,
				\\
				0 \CAS k>1.
			\end{cases}
		\]
		This shows the first formula in (ii) and the first formula in (v).
	\item 
		Assume that $(s_-,\sigma)$ is indivisible and use $(0;-\delta_0)$; note here that 
		$(0;-\delta_0)=(\chi_-\binom 10;-\delta_0)$:
		\[
			\Gamma(\mf h)(x;y)(s_-)_2
			= -\Gamma(\mf h)(0;-\delta_0)^*J\Gamma(\mf h)(x;y)\Big|_{s_-}^{s_+}
			= [y,0]-[x,-\delta_0] = \lambda_0.
		\]
		Assume that $(\sigma,s_+)$ is indivisible and use $(0;\delta_0)$; now 
		$(0;\delta_0)=(\chi_+\binom 10;\delta_0)$:
		\[
			-\Gamma(\mf h)(x;y)(s_+)_2
			= -\Gamma(\mf h)(0;-\delta_0)^*J\Gamma(\mf h)(x;y)\Big|_{s_-}^{s_+}
			= [y,0]-[x,\delta_0] = -\lambda_0.
		\]
		This proves the assertion about the second component of the boundary values. 
	\end{Ilist}
	The proof of the forward implication is finished, and the asserted formulae for the boundary values are established.

	For the proof of the converse implication assume that we have a pair 
	\[
		\big((f,\upxi,\upalpha),(g,\upeta,\upbeta)\big)\in\mrr{\mf P}(\mf h)\times\mrr{\mf P}(\mf h)
	\]
	that satisfies (i)--(v).  Set 
	\[
		\kappa_+\DE\Gamma(H)\Big(f_0;g-\sum_{i=0}^{\Delta-1}\lambda_{i+1}\mf w_i\Big)(s_+)_1,\quad 
		\kappa_-\DE\Gamma(H)\Big(f_0;g-\sum_{i=0}^{\Delta-1}\lambda_{i+1}\mf w_i\Big)(s_-)_1.
	\]
	Then
	\[
		\Big(f_0-\kappa_+\chi_+\binom 10-\kappa_-\chi_-\binom 10;g-\sum_{i=0}^{\Delta-1}\lambda_{i+1}\mf w_i\Big)
		\in B^{-1}.
	\]
	Choose $(x';y')$ in the set \cite[(4.13)]{kaltenbaeck.woracek:p4db} such that 
	\[
		[\psi(\mf h)\times\psi(\mf h)](x';y')=
		\Big(f_0-\kappa_+\chi_+\binom 10-\kappa_-\chi_-\binom 10;g-\sum_{i=0}^{\Delta-1}\lambda_{i+1}\mf w_i\Big).
	\]
	Set 
	\begin{alignat*}{2}
		& \sigma_i\DE\lambda_i \qquad &&\text{for }i=0,\ldots,\Delta,
		\\[1ex]
		& \tau_k\DE\beta_{k-\Delta+1} \qquad &&\text{for }k=\Delta,\ldots,\Delta+\oe-1,
	\end{alignat*}
	and 
	\begin{align*}
		(x;y) &\DE (x';y')+\kappa_+\Bigl(\chi_+\binom 10;\delta_0\Bigr)+\kappa_-\Bigl(\chi_-\binom 10;-\delta_0\Bigr)
		\\
		&\quad +\sigma_0(p_0;0)+\sum_{k=1}^{\Delta-1}\sigma_k(p_k;p_{k-1}+d_{k-1}\delta_0)
		+\sigma_\Delta(\mf w_\Delta+\mf b;p_{\Delta-1}+d_{\Delta-1}\delta_0)
		\\
		&\quad +\sum_{k=\Delta}^{\Delta+\oe-1}\tau_k(\delta_{k-1};\delta_k).
	\end{align*}
	Then $(x;y)\in T(\mf h)$.  Further, set 
	\[
		(\tilde f,\tilde\upxi,\tilde\upalpha)\DE\mrr\iota(x), \qquad 
		(\tilde g,\tilde\upeta,\tilde\upbeta)\DE\mrr\iota(y).
	\]
	By the already proved forward implication the pair $(\mrr\iota(x);\mrr\iota(y))$ belongs to $\mrr T(\mf h)$ and 
	satisfies (i)--(v).  We use this knowledge to show that 
	\[
		\big((\tilde f,\tilde\upxi,\tilde\upalpha),(\tilde g,\tilde\upeta,\tilde\upbeta)\big)-
		\big((f,\upxi,\upalpha),(g,\upeta,\upbeta)\big)\in\mrr T(\mf h).
	\]
	Again we proceed in a couple of steps.
	\begin{Ilist}
	\item 
		We evaluate the function part.
		\begin{align*}
			\tilde f= &\, \psi(\mf h)(x)
			\\
			= &\, \big[f_0-\kappa_+\chi_+\binom 10-\kappa_-\chi_-\binom 10\big]
			+\kappa_+\chi_+\binom 10-\kappa_-\chi_-\binom 10+\sum_{k=0}^\Delta\sigma_k\mf w_k
			\\
			= &\, f_0+\sum_{k=0}^\Delta\sigma_k\mf w_k=f.
		\end{align*}
		In particular, in the decomposition of $\tilde f$ according to (i) (let the constants corresponding to 
		$\tilde f$ be denoted by $\tilde\lambda_i$) we have $\tilde\lambda_i=\lambda_i$ for 
		$i=0,\ldots,\Delta$. 

		Applying $\psi(\mf h)$ to the second component yields 
		\[
			\tilde g=\psi(\mf h)(y)=\Big[g-\sum_{i=0}^{\Delta-1}\lambda_{i+1}\mf w_i\Big]
			+\sum_{k=1}^\Delta\sigma_k\mf w_{k-1}=g.
		\]
		If $(s_-,\sigma)$ is not indivisible, then $\tilde f(s_-)=f(s_-)$.  Analogously, if $(\sigma,s_+)$ is not 
		indivisible, then $\tilde f(s_+)=f(s_+)$.
	\item 
		Let $k\in\{1,\ldots,\Delta-1\}$; then 
		\begin{align*}
			& \tilde\eta_k-\tilde\xi_{k-1}
			\\[1ex]
			&= -\frac 12d_{k-1}\tilde\lambda_0-d_{k-1+\Delta}\tilde\lambda_0+
			{\small
			\begin{cases}
				\omega_k^-f(s_-) \CAS (\sigma,s_+)\text{ indivisible},
				\\[0.5ex]
				-\omega_k^+f(s_+) \CAS (s_-,\sigma)\text{ indivisible},
				\\[0.5ex]
				\omega_k^-f(s_-)-\omega_k^+f(s_+) \CASO,
			\end{cases}
			}
			\\[1ex]
			&=  
			-\frac 12d_{k-1}\lambda_0-d_{k-1+\Delta}\lambda_0+
			{\small
			\begin{cases}
				\omega_k^-f(s_-) \CAS (\sigma,s_+)\text{ indivisible},
				\\[0.5ex]
				-\omega_k^+f(s_+) \CAS (s_-,\sigma)\text{ indivisible},
				\\[0.5ex]
				\omega_k^-f(s_-)-\omega_k^+f(s_+) \CASO,
			\end{cases}
			}
			\\
			&= \eta_k-\xi_{k-1}.
		\end{align*}
	\item 
		Assume that $\oe=0$. Then, in the same way as above, we obtain that 
		\[
			\tilde\xi_{\Delta-1}=\xi_{\Delta-1}.
		\]
		If neither $(s_-,\sigma)$ nor $(\sigma,s_+)$ is indivisible, then also 
		\[
			\tilde\eta_0=\eta_0,
		\]
		and hence 
		\begin{multline*}
			\big((\tilde f,\tilde\upxi,\tilde\upalpha),(\tilde g,\tilde\upeta,\tilde\upbeta)\big)
			- \big((f,\upxi,\upalpha),(g,\upeta,\upbeta)\big)
			\\
			\in(\mrr\iota\times\mrr\iota)
			\Big(\Span\big\{(\delta_{k-1};\delta_k)\DS k=1,\ldots,\Delta-1\big\}\Big)
			\subseteq\mrr T(\mf h).
		\end{multline*}
		If one of $(s_-,\sigma)$ and $(\sigma,s_+)$ is indivisible, then 
		\begin{multline*}
			\big((\tilde f,\tilde\upxi,\tilde\upalpha),(\tilde g,\tilde\upeta,\tilde\upbeta)\big)
			-\big((f,\upxi,\upalpha),(g,\upeta,\upbeta)\big)
			\\
			\in(\mrr\iota\times\mrr\iota)
			\Big(
			\Span\big(\{(0;\delta_0)\}\cup\big\{(\delta_{k-1};\delta_k)\DS k=1,\ldots,\Delta-1\big\}\big)
			\Big)
			\subseteq\mrr T(\mf h).
		\end{multline*}
	\item
		Assume that $\oe>0$.  Then 
		\begin{align*}
			& \tilde\beta_j=\tau_{j+\Delta-1}=\beta_j \qquad\text{for }j=1,\ldots,\oe,
			\\
			& \tilde\alpha_\oe=\tilde\lambda_\Delta b_1=\lambda_\Delta b_1=\alpha_\oe.
		\end{align*}
		Using $\tilde\beta_j=\beta_j$ for $j=2,\ldots,\oe$ we obtain 
		\[
			\tilde\alpha_j=\tilde\beta_{j+1}+\tilde\lambda_\Delta b_{\oe+1-j}=\beta_{j+1}+\lambda_\Delta b_{\oe+1-j}
			=\alpha_j \qquad\text{for }j=1,\ldots,\oe-1.
		\]
		Using $\tilde\beta_1=\beta_1$ we obtain that 
		\[
			\tilde\xi_{\Delta-1}=\xi_{\Delta-1}.
		\]
		If neither $(s_-,\sigma)$ nor $(\sigma,s_+)$ is indivisible, then also 
		\[
			\tilde\eta_0=\eta_0,
		\]
		and hence 
		\begin{multline*}
			\big((\tilde f,\tilde\upxi,\tilde\upalpha),(\tilde g,\tilde\upeta,\tilde\upbeta)\big)
			- \big((f,\upxi,\upalpha),(g,\upeta,\upbeta)\big)
			\\
			\in(\mrr\iota\times\mrr\iota)
			\Big(\Span\big\{(\delta_{k-1};\delta_k)\DS k=1,\ldots,\Delta+\oe-1\big\}\Big)
			\subseteq\mrr T(\mf h).
		\end{multline*}
		If one of $(s_-,\sigma)$ and $(\sigma,s_+)$ is indivisible, then 
		\begin{multline*}
			\big((\tilde f,\tilde\upxi,\tilde\upalpha),(\tilde g,\tilde\upeta,\tilde\upbeta)\big)
			- \big((f,\upxi,\upalpha),(g,\upeta,\upbeta)\big)
			\\
			\in(\mrr\iota\times\mrr\iota)
			\Big(
			\Span\big(\{(0;\delta_0)\}\cup\big\{(\delta_{k-1};\delta_k)\DS k=1,\ldots,\Delta+\oe-1\big\}\big)
			\Big)
			\subseteq\mrr T(\mf h).
		\end{multline*}
	\end{Ilist}
	This finishes the proof of the backward implication.
\end{proof}

%
\section{Solution using regularised boundary values}
\label{D158}
%

%
\subsection{Regularised boundary values}
\label{D159}

We encode the discrete data of $\mf h$ in a polynomial. 

\begin{Definition}\label{D185}
	Let $\mf h=\langle H;\oe,b_j;d_j\rangle$ be an elementary indefinite Hamiltonian of kind (A)
	and define the polynomial
	\begin{align}\label{D176}
		\ms p(z) \DE&\; -\sum_{n=1}^{2\Delta}d_{n-1}z^n + \sum_{n=2\Delta+1}^{2\Delta+\oe}b_{\oe+2\Delta+1-n}z^n
		\\[1ex]
		=&\; -d_0z-\ldots-d_{2\Delta-1}z^{2\Delta} +
		\begin{cases}
			0 & \text{if} \ \oe=0,
			\\[0.5ex]
			b_{\oe}z^{2\Delta+1}+\ldots+b_1z^{2\Delta+\oe} &\text{if} \ \oe>0.
		\end{cases}
		\nonumber	
	\end{align}
\end{Definition}

\noindent
In the following we often write $\mrr\Gamma$ for $\mrr\Gamma(\mf h)$.

\begin{theorem}\label{D115}
	Let $\mf h=\langle H;\oe,b_j;d_j\rangle$ be an elementary indefinite Hamiltonian of kind \textup{(A)}, let $z\in\bb C$, 
	let $F=(f;\upxi,\upalpha)\in\mrr{\mf P}(\mf h)$ such that $(F;zF)\in\mrr T(\mf h)$  
	and let $\hat f=(\hat f_-,\hat f_+)$ be a locally absolutely continuous representative of $f$ on $(s_-,\sigma)\cup(\sigma,s_+)$.  
	Then the limits 
	\begin{align}
		\label{D134}
		\Gammar^\pm \hat f_\pm &\DE \lim_{x\to\sigma\pm}\hat f_\pm(x)_2,
		\\[1ex]
		\label{D135}
		\Gammas^\pm(z) \hat f_\pm &\DE \lim_{x\to\sigma\pm}
		\sum_{n=0}^\Delta z^n\bigl(\mf w_n(x)\bigr)^*J
		\biggl(\hat f_\pm(x)-\bigl(\Gammar^\pm \hat f_\pm\bigr)\sum_{j=\Delta+1}^{2\Delta-n}z^j\mf w_j(x)\biggr)
	\end{align}
	exists and
	\begin{align}
		\Gammar^+\hat f_+ &= \Gammar^-\hat f_-,
		\label{D136}
		\\[1ex]
		\Gammas^+(z)\hat f_+ &= \Gammas^-(z)\hat f_- + \ms p(z)\Gammar^-\hat f_- 
		\nonumber\\[0.5ex]
		&\quad + \mrr\Gamma(F;zF)(s_-)_1 - \hat f_-(s_-)_1 - \mrr\Gamma(F;zF)(s_+)_1 + \hat f_+(s_+)_1.
		\label{D137}
	\end{align}
	Moreover, the following statements are true.
	\begin{Enumerate}
	\item
		If $(s_-,\sigma)$ is not indivisible, then $\hat f_-$ is uniquely determined and 
		\begin{equation}\label{D130}
			\hat f_-(s_-) = \mrr\Gamma(F,zF)(s_-).
		\end{equation}
		Otherwise, $\hat f_-$ is unique up to an additive multiple of $\binom10$, and there exists exactly one $\hat f_-$
		so that \eqref{D130} holds.
	\item
		If $(\sigma,s_+)$ is not indivisible, then $\hat f_+$ is uniquely determined and
		\begin{equation}\label{D131}
			\hat f_+(s_+) = \mrr\Gamma(F,zF)(s_+).
		\end{equation}
		Otherwise, $\hat f_+$ is unique up to an additive multiple of $\binom10$, and there exists exactly one $\hat f_+$
		so that \eqref{D131} holds.
	\end{Enumerate}
\end{theorem}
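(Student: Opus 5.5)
The plan is to work entirely through \cref{D38} with $g=zf$. Write $F=(f,\upxi,\upalpha)$, so $G=zF=(zf,z\upxi,z\upalpha)$.

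\textbf{Step 1: an explicit representative.} By \cref{D38}\,(i), $f=f_0+\sum_{i=0}^\Delta\lambda_i\mf w_i$ with
\[
	\Bigl(f_0;zf-\sum_{i=0}^{\Delta-1}\lambda_{i+1}\mf w_i\Bigr)\in\Tmax(H).
\]
Let $\hat f_0$ be an absolutely continuous representative of $f_0$. This is possible up to the endpoints $s_\pm$, and also at $\sigma$, because $f_0\in\Dom\Tmax(H)$ and $H_\pm$ are in limit circle at $s_\mp$. Put $\hat f\DE\hat f_0+\sum\lambda_i\mf w_i$. Uniqueness up to multiples of $\binom10$ on indivisible pieces follows from \cref{D72}. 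Adjusting this constant so that the boundary value equals $\mrr\Gamma(F;zF)(s_\pm)$ is possible because of \eqref{D60}/\eqref{D61}, whose second components equal $\lambda_0=\hat f(s_\pm)_2$; note that $\mf w_0=\binom01$ and $\mf w_i(s_\pm)_2=\omega_i^\pm$. This gives (i) and (ii).

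\textbf{Step 2: the limit $\Gammar^\pm$.} The solution $\hat f$ of $\hat f'=zJH\hat f$ is treated by iterating the relation $\hat f_0'=JH(zf-\sum\lambda_{i+1}\mf w_i)$. Since $\mf w_n'=JH\mf w_{n-1}$ by \eqref{D64}, one shows inductively that $\lambda_i=z^i\lambda_0$. Indeed, the components of $f$ along $\mf w_i$ modulo $\Dom\Tmax(H)$ are forced, using the linear independence in \cref{D32}.

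With $\lambda_i=z^i\lambda_0$ we get
\[
	\hat f=\hat f_0+\lambda_0\sum_{i=0}^\Delta z^i\mf w_i .
\]
Near $\sigma$, by the explicit structure of \eqref{D62} and condition {\sf(I)}, the second components of $\mf w_i$, $i\ge1$, tend to $0$ (after normalisation); the first components remain integrable against $h_2$. The second component of $\hat f_0$ has a limit since $f_0\in\Dom\Tmax$ near the limit-point end in the singular direction. Hence $\Gammar^\pm\hat f_\pm$ exists. Its value is $\lambda_0$ plus the limit of $\hat f_0(x)_2$, which should vanish by the $L^2$ condition together with {\sf(HS)}. Since $\lambda_0$ is common to both sides, we obtain \eqref{D136}.

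\textbf{Step 3: the limit $\Gammas$ and the jump \eqref{D137}.} Let $\hat u\DE\sum_{n=0}^\Delta \bar z^n\mf w_n$, truncated as in \eqref{D135}. The idea is that $\sum_{n=0}^\Delta z^n\mf w_n(x)^*J(\cdots)$ is a Wronskian-type bracket between $\hat f$ and a truncated solution built from the $\mf w_n$. Its derivative is computed via Green's identity \eqref{D108} on $[s_-,x]$ and $[x,s_+]$. The subtracted terms $\Gammar\sum_{j=\Delta+1}^{2\Delta-n}z^j\mf w_j$ are exactly the counterterms that cancel the non-integrable products $\mf w_n^*H\mf w_m$ with $n+m\ge\Delta$. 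After this cancellation the integrand becomes $L^1$ near $\sigma$ (this is where {\sf(HS)} enters), and the limits in \eqref{D135} exist.

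Taking the difference of the two one-sided limits, and expressing the integrals over $(s_-,\sigma)\cup(\sigma,s_+)$, the remaining terms are identified with the equations of \cref{D38}\,(ii)--(v) with $\eta_k=z\xi_k$ and $\beta_j=z\alpha_j$. Solving this recursion successively expresses $\xi_{\Delta-1},\dots,\xi_0$, $\alpha_j$ in terms of $\lambda_0$ and boundary values. Summing against powers of $z$ produces the coefficients $-d_{n-1}z^n$ for $n\le2\Delta$ and $b_{\oe+2\Delta+1-n}z^n$ for larger $n$, i.e.\ $\ms p(z)\Gammar^-\hat f_-$. The boundary terms appear as the differences $\mrr\Gamma(F;zF)(s_\pm)_1-\hat f_\pm(s_\pm)_1$, which vanish in the non-indivisible case.

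\textbf{Main obstacle.} I expect Step 3 to be the hardest. It requires the careful bookkeeping of which products $\mf w_n^*JH\mf w_m$ are integrable near $\sigma$. It also requires matching the telescoped sums with the $\frac12 d$-terms from (ii)--(iv), which combine to full $d$-coefficients with contributions from both sides. I would first verify the case $\Delta=1$, $\oe=0$ by hand to fix signs, and then do the general induction.
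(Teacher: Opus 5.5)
\textbf{The gap is in Step~2.} This is the one non-routine ingredient of the theorem. Everything you say about $\Gammar^\pm$ rests on your assertion that $\mf w_i(x)_2\to0$ as $x\to\sigma$ for $1\le i\le\Delta$. That includes existence in \eqref{D134}, the equality \eqref{D136}, and the value $\Gammar^\pm\hat f_\pm=\lambda_0$. Step~3 needs that value too, because your counterterm carries $\Gammar^\pm\hat f_\pm$, whereas the Green's-identity computation produces $\lambda_0$.

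The assertion does not follow from \textsf{(I)} and the form of \eqref{D62}. Already $\mf w_1(x)_2=\omega_1+\int_{s_-}^x h_3$, and \textsf{(I)} says nothing about $\int h_3$, because $h_2$ is not integrable at $\sigma$. In general $\mf w_i(x)_2=\omega_i+\int_{s_-}^x\binom10^*H\mf w_{i-1}$. Cauchy--Schwarz with \textsf{(I)} only helps when $\mf w_{i-1}\in L^2(H)$, i.e.\ when $i>\Delta(H_-)$. In that range your claim is correct: $(\mf w_i;\mf w_{i-1})$ and $(\binom10;0)$ both lie in $\Tmax(H_-)$, and the boundary form vanishes at the limit point end $\sigma$. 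There is also nothing to `normalise', since the $\omega_i$ are already fixed by \cref{D15}.

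The paper does not prove this directly either. It takes existence and equality of the one-sided limits of $\hat f(x)_2$ from \cite[Theorem~4.21\,(iv)]{langer.woracek:gpinf}. It reads off the value $\lambda_0$ when an adjacent interval is indivisible (\cref{D165}, \eqref{D175}). It then transfers this to the general case via the auxiliary Hamiltonian $\mf h^{-,0}$ of \cref{D174}. You need that input, or a genuine proof that $\lim_{x\to\sigma}\mf w_i(x)_2=0$ for $1\le i\le\Delta(H_\pm)$.

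Apart from this, your outline matches the paper. The recursion $\lambda_i=z^i\lambda_0$ and the solution of \cref{D38}\,(ii)--(v) with $\eta_k=z\xi_k$, $\beta_j=z\alpha_j$ are \cref{D120,D121}. Your Wronskian bracket is \cref{D122}, obtained by iterating Green's identity.

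The limit in \eqref{D135} exists for a simpler reason than the one you give. \cref{D122} identifies the bracket, up to sign and an additive constant, with $z^{\Delta+1}\int_{s_-}^x\mf w_\Delta^*H\tilde f$. Here $\tilde f=\hat f-\lambda_0\sum_{l<\Delta}z^l\mf w_l$ and $\mf w_\Delta$ both lie in $L^2(H)$. So no term-by-term integrability bookkeeping and no direct use of \textsf{(HS)} is needed.

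Step~1 also needs correcting:
\begin{enumerate}
\item $\hat f_0$ does not extend to $\sigma$, which is a limit point end for both $H_\pm$.
\item The limit circle ends are $s_\pm$, not $s_\mp$.
\item Only $\hat f_0(x)_2$ converges, by \textsf{(I)} and Cauchy--Schwarz. Its limit is $0$ because the boundary form at $\sigma$ vanishes when paired with $(\binom10;0)$, not because of \textsf{(HS)}.
\end{enumerate}
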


\medskip

\noindent
Before we prove \cref{D115}, let us add some remarks and introduce some notation.

\pagebreak[3]

\begin{remark}\label{D119}
\rule{0ex}{1ex}
\begin{Enumerate}
\item
	If $(s_-,\sigma)$ and $(\sigma,s_+)$ are both not indivisible, the relation \cref{D137} automatically becomes 
	\begin{equation}\label{D140}
		\Gammas^+(z)\hat f_+ = \Gammas^-(z)\hat f_- + \ms p(z)\Gammar^-\hat f_-.
	\end{equation}
	In general the choice of $\hat f_+$ or $\hat f_-$ can always be made such that we have \cref{D140} instead of \cref{D137}.
\item
	The generalised boundary mapping $\Gammas^-(z)$ depends only on $H_-$ and $\Delta$;
	the latter depends on the strengths of the singularities of both $H_-$ and $H_+$.
\item
	If $H_-$ is diagonal, then $\mf w_{2\Delta}$ is not needed for the evaluation of $\Gammas^-(z)\hat f_-$
	since $(\mf w_0(x))^*J\mf w_{2\Delta}(x)=0$ by \cref{D63}\,(ii).
	A similar statement holds for $H_+$.
\end{Enumerate}
\end{remark}

\begin{Definition}\label{D170}
	We set
	\begin{equation}\label{D139}
		\Gamma^\pm(z)\hat f \DE \begin{pmatrix} \Gammas^\pm(z)\hat f \\[1ex] \Gammar^\pm\hat f \end{pmatrix}
	\end{equation}
	for $\hat f$ as in \cref{D115}
	and define the matrix
	\begin{equation}\label{D189}
		\ms R(z) \DE \begin{pmatrix} 1 & \ms p(z) \\[1ex] 0 & 1 \end{pmatrix}.
	\end{equation}
\end{Definition}

\begin{Remark}\label{D138}
	With the notation from \cref{D170} the relations in \eqref{D136} and \eqref{D140} can now be written as
	\begin{equation}\label{D177}
		\Gamma^+(z)\hat f = \ms R(z)\Gamma^-(z)\hat f.
	\end{equation}
\end{Remark}

\medskip

\noindent
For the proof of \cref{D115} let $F$ be as in \cref{D115}.  Since $(F;zF)\in\mrr T(\mf h)$, we can write
\[
	f = f_0 + \sum_{l=0}^\Delta \lambda_l\mf w_l = \tilde f + \sum_{l=0}^{\Delta-1} \lambda_l\mf w_l
\]
with $f_0\in\Tmax(H)$ and $\tilde f\in L^2(H)$ by (i) in \cref{D38}.
Further, set 
\begin{equation}\label{D172}
	G\DE zF = (g;\upeta,\upbeta).
\end{equation}
First we need a couple of lemmas.

\begin{lemma}\label{D120}
	Let $F$ and $\hat f$ be as in \cref{D115} and let $G$ be as in \eqref{D172}.  Then
	\begin{align}
		\lambda_k &= z^k\lambda_0,
		\label{D117}
		\\[1ex]
		\xi_k &= z^{\Delta-k}\int_{s_-}^{s_+} \mf w_\Delta^* H\Bigl(f-\sum_{l=0}^{\Delta-1}z^l\lambda_0\mf w_l\Bigr)
		+ \sum_{l=1}^{\Delta-k} z^{l-1}
		\bigl(\omega_{k+l}^+\hat f(s_+)_1 - \omega_{k+l}^-\hat f(s_-)_1\bigr)
		\nonumber\\[1ex]
		&\quad + \Biggl(\frac{1}{2}\sum_{l=0}^{\Delta-1} d_{k+l}z^l
		+ \sum_{l=0}^{\Delta-k-1}d_{\Delta+l+k}z^{\Delta+l}
		- \sum_{l=1}^\oe b_{\oe+1-l}z^{2\Delta+l-k-1}\Biggr)\lambda_0
		\label{D105}
	\end{align}
	for $k\in\{0,\ldots,\Delta-1\}$.
	If $\oe>0$, then
	\begin{equation}\label{D118}
		\alpha_{\oe-k} = \sum_{j=0}^k b_{k+1-j}\,z^{\Delta+j}\lambda_0,
		\qquad k\in\{0,\ldots,\oe-1\}.
	\end{equation}
\end{lemma}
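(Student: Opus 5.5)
The plan is to read off all three identities from the characterisation in \cref{D38}, applied with $G=zF$, i.e.\ $g=zf$, $\upeta=z\upxi$, $\upbeta=z\upalpha$.

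\emph{Step 1: the relation \eqref{D117}.} By \cref{D38}\,(i), $f=f_0+\sum_{i=0}^\Delta\lambda_i\mf w_i$ and the second component of \eqref{D73} is $g-\sum_{i=0}^{\Delta-1}\lambda_{i+1}\mf w_i$. Since $g=zf$, we can write
\[
	g-\sum_{i=0}^{\Delta-1}\lambda_{i+1}\mf w_i=zf_0+\sum_{i=0}^{\Delta-1}(z\lambda_i-\lambda_{i+1})\mf w_i+z\lambda_\Delta\mf w_\Delta .
\]
This expression must lie in $L^2(H)$, and $zf_0\in L^2(H)$. By \cref{D32}, $\{\mf w_0,\ldots,\mf w_{\Delta-1}\}$ is linearly independent modulo $L^2(H)$. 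However, $\mf w_\Delta$ may itself lie in $L^2(H)$, so this alone is not enough. The cleaner route is to compare with the $L^2_\Delta$-decomposition \eqref{D35} of $g$: we have $\mu_i=\lambda_{i+1}$, as shown in the proof of \cref{D38}, and also $\mu_i=z\lambda_i$, because $g=zf$ and the decomposition \eqref{D35} is unique. Hence $\lambda_{i+1}=z\lambda_i$ for $i=0,\ldots,\Delta-1$, which gives $\lambda_k=z^k\lambda_0$. Using \cref{D72}, with some care in the indivisible case, $\hat f$ differs from $f$ only in the manner allowed there.

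\emph{Step 2: the relation \eqref{D118}.} Assume $\oe>0$. From \cref{D38}\,(ii) we get $\alpha_\oe=b_1\lambda_\Delta=b_1z^\Delta\lambda_0$. From (v) we get $z\alpha_j=\beta_j=\alpha_{j-1}-b_{\oe+2-j}z^\Delta\lambda_0$ for $j=2,\ldots,\oe$. Rewrite this as the downward recursion
\[
	\alpha_{j-1}=z\alpha_j+b_{\oe+2-j}z^\Delta\lambda_0 ,
\]
and induct on $k$ with $j-1=\oe-k$. This yields $\alpha_{\oe-k}=\sum_{j=0}^k b_{k+1-j}z^{\Delta+j}\lambda_0$.

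\emph{Step 3: the formula \eqref{D105}.} This is the main bookkeeping step. Write $I\DE\int\mf w_\Delta^*H\bigl(f-\sum_{l<\Delta}z^l\lambda_0\mf w_l\bigr)$. Since $g-\sum\lambda_{i+1}\mf w_i=z\bigl(f-\sum_{l<\Delta}z^l\lambda_0\mf w_l\bigr)$, the integral appearing in (ii) and (v) equals $zI$.

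The top index comes first. If $\oe=0$, then (ii) gives $\xi_{\Delta-1}$ directly as
\[
	zI+\tfrac12\sum_i d_{\Delta-1+i}z^i\lambda_0+d_{2\Delta-1}z^\Delta\lambda_0+\bigl(\omega^+_\Delta\hat f(s_+)_1-\omega^-_\Delta\hat f(s_-)_1\bigr).
\]
For the last bracket, the indivisible cases are absorbed by the convention $\omega_k^\mp=0$ on indivisible intervals, as remarked in the proof of \cref{D38}. We also need to replace $f(s_\pm)_1$ by $\hat f(s_\pm)_1$, where $f(s_\pm)$ denotes the value of the distinguished representative; I expect these to agree up to the normalisation in \cref{D115}\,(i),(ii), and this needs checking. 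If $\oe>0$, use (v): it gives the same expression minus $\beta_1=z\alpha_1$. Then substitute \eqref{D118} with $k=\oe-1$ to produce the term $-\sum_l b_{\oe+1-l}z^{2\Delta+l-\Delta}\lambda_0$, matching \eqref{D105} at $k=\Delta-1$.

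Then descend via (iv). With $\eta_k=z\xi_k$, it reads
\[
	\xi_{k-1}=z\xi_k+\tfrac12d_{k-1}\lambda_0+\tfrac12d_{k+\Delta-1}z^\Delta\lambda_0-\bigl(\omega_k^-\hat f(s_-)_1-\omega_k^+\hat f(s_+)_1\bigr).
\]
Iterate this from $k=\Delta-1$ downwards, by induction on $\Delta-1-k$. Each step multiplies the accumulated terms by $z$ and adds new ones. The task is to verify that the $d$-terms reorganise into $\tfrac12\sum_l d_{k+l}z^l+\sum_{l=0}^{\Delta-k-1}d_{\Delta+l+k}z^{\Delta+l}$. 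Here the two halves $\tfrac12 d_{k+\Delta-1}z^\Delta$ from (iv) combine with the shifted half-sum to give a full coefficient, and the $d_{2\Delta-1}$ term propagates. The $\omega$-terms accumulate as $\sum_{l=1}^{\Delta-k}z^{l-1}(\omega^+_{k+l}\hat f(s_+)_1-\omega^-_{k+l}\hat f(s_-)_1)$, and the $b$-terms acquire the factor $z^{\Delta-1-k}$. This index reorganisation of the $d$-coefficients is where I expect the only real risk of error, so I would check it separately at $k=\Delta-2$ before doing the general induction.
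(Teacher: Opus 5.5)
Your plan follows the paper's proof: $\lambda_{l+1}=z\lambda_l$ comes from the $L^2(H)$-membership in \cref{D38}\,(i), \eqref{D118} from the downward recursion obtained from (ii) and (v), and \eqref{D105} by starting at $k=\Delta-1$ with (ii) or (v) and descending via (iv) with $\eta_k=z\xi_k$. Your recursion for the $d$-, $b$- and $\omega$-terms is the one the paper verifies, and the replacement of $f(s_\pm)_1$ by $\hat f(s_\pm)_1$ holds for the reason you suspect: on a non-indivisible side the representative is unique by \cref{D72}, and on an indivisible side $\omega_k^\mp=0$ for $k\ge1$.

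One correction to Step~1: your worry is backwards. $\mf w_\Delta$ does lie in $L^2(H)$ by \cref{D15}, since $\Delta\ge\Delta(H_\pm)$, and this is exactly what makes the direct argument work. The same fact is also what justifies identifying the coefficients in \cref{D38}\,(i) with those in \eqref{D35}, which your alternative route silently relies on.
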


\begin{proof}
	We have $g=zf$ by \eqref{D172}, and hence
	\begin{equation}\label{D173}
		g - \sum_{l=0}^{\Delta-1}\lambda_{l+1}\mf w_l
		= zf - \sum_{l=0}^{\Delta-1}\lambda_{l+1}\mf w_l
		= zf_0 + z\lambda_\Delta\mf w_\Delta + \sum_{l=0}^{\Delta-1}(z\lambda_l-\lambda_{l+1})\mf w_l.
	\end{equation}
	Since the left-hand side is in $L^2(H)$ by (i) in \cref{D38}, it follows that $\lambda_{l+1}=z\lambda_l$
	for $l\in\{0,\ldots,\Delta-1\}$, which yields \eqref{D117}.
	
	In the case when $\oe>0$ we obtain from (v) and (ii) in \cref{D38} that
	\begin{align*}
		\alpha_{j-1} &= z\alpha_j + b_{\oe+2-j}z^\Delta\lambda_0, \qquad j=2,\ldots,\oe,
		\\[1ex]
		\alpha_{\oe} &= b_1 z^\Delta\lambda_0,
	\end{align*}
	which, by induction, implies \eqref{D118},
	and, in particular,
	\[
		\beta_1 = z\alpha_1
		= z\sum_{j=0}^{\oe-1} b_{\oe-j}\,z^{\Delta+j}\lambda_0
		= \sum_{l=1}^\oe b_{\oe+1-l}\,z^{\Delta+l}\lambda_0.
	\]

	Next we prove \eqref{D105} by induction.
	Let us start with $k=\Delta-1$: when $\oe=0$, we use (ii) in \cref{D38}; when $\oe>0$, we use (v) in \cref{D38} to obtain
	\begin{align*}
		\xi_{\Delta-1} &= \int_{s_-}^{s_+} \mf w_\Delta^* H\Bigl(zf-\sum_{l=0}^{\Delta-1}\lambda_{l+1}\mf w_l\Bigr)
		+ \frac{1}{2}\sum_{l=0}^{\Delta-1} d_{l+\Delta-1}z^l \lambda_0
		+ d_{2\Delta-1}z^\Delta \lambda_0
		\\[1ex]
		&\quad + 
		\begin{cases}
			- \omega_\Delta^- f(s_-)_1 & \text{if $(\sigma,s_+)$ is indivisible},
			\\[0.5ex]
			\omega_\Delta^+ f(s_+)_1 & \text{if $(s_-,\sigma)$ is indivisible},
			\\[0.5ex]
		 	\omega_\Delta^+ f(s_+)_1 - \omega_\Delta^- f(s_-)_1 & \text{otherwise}
		\end{cases}
		\\[1ex]
		&\quad - 
		\begin{cases}
			\beta_1 &\text{if} \ \oe>0,
			\\[0.5ex]
			0 &\text{if} \ \oe=0
		\end{cases}
		\\[1ex]
		&= z\int_{s_-}^{s_+} \mf w_\Delta^* H\Bigl(f-\sum_{l=0}^{\Delta-1}z^l\lambda_0\mf w_l\Bigr)
		+ \omega_\Delta^+\hat f(s_+)_1 - \omega_\Delta^-\hat f(s_-)_1
		\\[1ex]
		&\quad + \Biggl(\frac{1}{2}\sum_{l=0}^{\Delta-1} d_{l+\Delta-1}z^l
		+ d_{2\Delta-1}z^\Delta - \sum_{l=1}^\oe b_{\oe+1-l}z^{\Delta+l}\Biggr)\lambda_0;
	\end{align*}
	note that $\omega_\Delta^-=0$ if $(s_-,\sigma)$ is indivisible and that $\hat f_-=f_-$ otherwise,
	and a similar statement is true for $(\sigma,s_+)$.
	This proves \eqref{D105} for $k=\Delta-1$.

	Now let $k\in\{0,\ldots,\Delta-2\}$ and assume that \eqref{D105} is true
	with $k$ replaced by $k+1$.
	Then (iv) in \cref{D38} (with a similar consideration as above using $\omega_{k+1}^-=0$ if $(s_-,\sigma)$ is indivisible 
	and $\hat f_-=f_-$ otherwise) implies that
	\begin{align*}
		\xi_k &= z\xi_{k+1} + \frac{1}{2}d_{\Delta+k}z^\Delta\lambda_0
		+ \frac{1}{2}d_k\lambda_0
		+ \omega_{k+1}^+\hat f(s_+)_1 - \omega_{k+1}^-\hat f(s_-)_1
		\\[1ex] 
		&= zz^{\Delta-k-1}\int_{s_-}^{s_+} \mf w_\Delta^* H\Bigl(f-\sum_{l=0}^{\Delta-1}z^l\lambda_0\mf w_l\Bigr)
		\\[1ex]
		&\quad + z\sum_{l=1}^{\Delta-k-1}z^{l-1}\bigl(\omega_{k+l+1}^+\hat f(s_+)_1
		- \omega_{k+l+1}^-\hat f(s_-)_1\bigr)
		\\[1ex]
		&\quad + z\Biggl(\frac{1}{2}\sum_{l=0}^{\Delta-1} d_{k+l+1}z^l
		+ \sum_{l=0}^{\Delta-k-2}d_{\Delta+l+k+1}z^{\Delta+l}
		- \sum_{l=1}^\oe b_{\oe+1-l}z^{2\Delta+l-k-2}\Biggr)\lambda_0
		\\[1ex]
		&\quad + \frac{1}{2}d_{\Delta+k}z^\Delta\lambda_0 + \frac{1}{2}d_k\lambda_0
		+ \omega_{k+1}^+\hat f(s_+)_1 - \omega_{k+1}^-\hat f(s_-)_1
		\displaybreak[0]\\[1ex] 
		&= z^{\Delta-k}\int_{s_-}^{s_+} \mf w_\Delta^* H\Bigl(f-\sum_{l=0}^{\Delta-1}z^l\lambda_0\mf w_l\Bigr)
		+ \sum_{l=2}^{\Delta-k} z^{l-1}\bigl(\omega_{k+l}^+\hat f(s_+)_1 - \omega_{k+l}^-\hat f(s_-)_1\bigr)
		\\[1ex]
		&\quad + \omega_{k+1}^+\hat f(s_+)_1 - \omega_{k+1}^-\hat f(s_-)_1
		\\[1ex]
		&\quad + \Biggl(\frac{1}{2}\sum_{l=1}^\Delta d_{k+l}z^l
		+ \sum_{l=1}^{\Delta-k-1} d_{\Delta+l+k}z^{\Delta+l}
		+ \frac{1}{2}d_{\Delta+k}z^\Delta + \frac{1}{2}d_k
		\\[1ex]
		&\quad - \sum_{l=1}^\oe b_{\oe+1-l}z^{2\Delta+l-k-1}\Biggr)\lambda_0
		\displaybreak[0]\\[1ex] 
		&= z^{\Delta-k}\int_{s_-}^{s_+} \mf w_\Delta^* H\Bigl(f-\sum_{l=0}^{\Delta-1}z^l\lambda_0\mf w_l\Bigr)
		+ \sum_{l=1}^{\Delta-k} z^{l-1}\bigl(\omega_{k+l}^+\hat f(s_+)_1 - \omega_{k+l}^-\hat f(s_-)_1\bigr)
		\\[1ex]
		&\quad + \Biggl(\frac{1}{2}\sum_{l=0}^{\Delta-1} d_{k+l}z^l
		+ \sum_{l=0}^{\Delta-k-1}d_{\Delta+l+k}z^{\Delta+l}
		- \sum_{l=1}^\oe b_{\oe+1-l}z^{2\Delta+l-k-1}\Biggr)\lambda_0,
	\end{align*}
	which is equal to the right-hand side of \eqref{D105}.
	Hence \eqref{D105} holds for all $k\in\{0,\ldots,\Delta-1\}$.
\end{proof}

\begin{lemma}\label{D121}
	Under the assumptions of \cref{D115} we have
	\begin{equation}\label{D107}
	\begin{aligned}
		& z^{\Delta+1}\int_{s_-}^{s_+} \mf w_\Delta^* H\Bigl(f-\sum_{l=0}^{\Delta-1}z^l\lambda_0\mf w_l\Bigr)
		+ \sum_{l=1}^\Delta z^l\bigl(\omega_l^+\hat f(s_+)_1 - \omega_l^-\hat f(s_-)_1\bigr)
		\\[1ex]
		&= \lambda_0\ms p(z) + \mrr\Gamma(F;zF)(s_-)_1 - \mrr\Gamma(F;zF)(s_+)_1.
	\end{aligned}
	\end{equation}
\end{lemma}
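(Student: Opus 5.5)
The plan is to combine the formula \eqref{D105} for $k=0$ with the relation between $\eta_0$ and the boundary values, and to use $G=zF$, so that $\eta_0=z\xi_0$. The only ingredients are \cref{D120} and the first Green-identity computation in the proof of \cref{D38}.

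First, I take the identity obtained in the first bullet of the forward part of the proof of \cref{D38}, namely
\[
	\mrr\Gamma(F;G)(s_-)_1-\mrr\Gamma(F;G)(s_+)_1=\eta_0+\frac12\sum_{i=0}^{\Delta-1}d_i\lambda_{i+1}.
\]
It was derived from the abstract Green identity \eqref{D77} with the element $(p_0;0)$, before any case distinction about indivisibility. Hence it holds in all cases, not only in the situation of \cref{D38}\,(iii); in the indivisible cases it is equivalent to \eqref{D60}, \eqref{D61}. Inserting $\lambda_{i+1}=z^{i+1}\lambda_0$ from \eqref{D117} and $\eta_0=z\xi_0$ gives
\[
	z\xi_0=\mrr\Gamma(F;zF)(s_-)_1-\mrr\Gamma(F;zF)(s_+)_1-\frac12\sum_{l=0}^{\Delta-1}d_l z^{l+1}\lambda_0 .
\]

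Second, I multiply \eqref{D105} with $k=0$ by $z$. This gives
\begin{align*}
	z\xi_0&=z^{\Delta+1}\int_{s_-}^{s_+}\mf w_\Delta^*H\Bigl(f-\sum_{l=0}^{\Delta-1}z^l\lambda_0\mf w_l\Bigr)
	+\sum_{l=1}^{\Delta}z^{l}\bigl(\omega_l^+\hat f(s_+)_1-\omega_l^-\hat f(s_-)_1\bigr)
	\\
	&\quad+\Bigl(\frac12\sum_{l=0}^{\Delta-1}d_lz^{l+1}+\sum_{l=0}^{\Delta-1}d_{\Delta+l}z^{\Delta+l+1}
	-\sum_{l=1}^{\oe}b_{\oe+1-l}z^{2\Delta+l}\Bigr)\lambda_0 .
\end{align*}
Equating the two expressions for $z\xi_0$ isolates the left-hand side of \eqref{D107}. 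The remaining $\lambda_0$-coefficient is
\[
	-\sum_{l=0}^{\Delta-1}d_lz^{l+1}-\sum_{l=0}^{\Delta-1}d_{\Delta+l}z^{\Delta+l+1}+\sum_{l=1}^{\oe}b_{\oe+1-l}z^{2\Delta+l}.
\]
Here the two halves of $\frac12\sum_l d_l z^{l+1}$ combine into a full $d$-term. After reindexing with $n=l+1$ and $n=\Delta+l+1$, this coefficient is precisely $\ms p(z)$ from \eqref{D176}, which proves \eqref{D107}.

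The main point requiring care is the first step: I must justify that the $\eta_0$-relation is available even when one of $(s_-,\sigma)$, $(\sigma,s_+)$ is indivisible. The statement of \cref{D38}\,(iii) excludes that case, so I will appeal directly to the Green-identity computation in its proof, which is case-independent. I must also check that the signs and the index bookkeeping with $\mf b$ and the $c_j$ are consistent with \eqref{D105}. Everything else is routine reindexing.
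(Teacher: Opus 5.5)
Your proposal is correct and follows the paper's proof. You equate $\eta_0=z\xi_0$ from \eqref{D105} with $k=0$ with the boundary-value expression for $\eta_0$, and identify the leftover $\lambda_0$-coefficient as $\ms p(z)$. The only difference is that you take the case-free relation $\mrr\Gamma(F;zF)(s_-)_1-\mrr\Gamma(F;zF)(s_+)_1=\eta_0+\frac12\sum_i d_i\lambda_{i+1}$ directly from the first Green-identity step in the proof of \cref{D38}. The paper instead goes case by case through \cref{D38}\,(iii), \eqref{D60} and \eqref{D61}, which are all derived from that same identity, and then recombines them; your route is simply cleaner bookkeeping.
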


\begin{proof}
%
	If neither $(s_-,\sigma)$ nor $(\sigma,s_+)$ is indivisible we use (iii) in \cref{D38}; 
	if $(s_-,\sigma)$ or $(\sigma,s_+)$ is indivisible, we use \eqref{D60} or \eqref{D61} respectively 
	to obtain
	\begin{align}
		\eta_0 &= -\frac{1}{2}\sum_{l=0}^{\Delta-1} d_l\lambda_{l+1}
		\nonumber\\[1ex]
		&\quad +
		\begin{cases}
			\hat f(s_-)_1 - \hat f(s_+)_1 & \text{if neither $(s_-,\sigma)$ nor $(\sigma,s_+)$ is indiv.,}
			\\[1ex]
			\mrr\Gamma(F;zF)(s_-)_1 - \hat f(s_-)_1 & \text{if $(s_-,\sigma)$ is indivisible,}
			\\[1ex]
			-\mrr\Gamma(F;zF)(s_+)_1 + \hat f(s_+)_1 & \text{if $(\sigma,s_+)$ is indivisible}
		\end{cases}
		\nonumber\\[1ex]
		&= -\frac{1}{2}\sum_{l=0}^{\Delta-1} d_l z^{l+1}\lambda_0 + \mrr\Gamma(F;zF)(s_-)_1 - \mrr\Gamma(F;zF)(s_+)_1,
		\label{D106}
	\end{align}
	where, for the last relation, we used again \eqref{D60} and \eqref{D61}, namely, 
	e.g.\ $\mrr\Gamma(F;zF)(s_-)_1=\hat f(s_-)_1$ if $(s_-,\sigma)$ is not indivisible.
	On the other hand, \eqref{D105} for $k=0$ yields
	\begin{align*}
		\eta_0 &= z\xi_0
		\notag\\[1ex]
		&= z^{\Delta+1}\int_{s_-}^{s_+} \mf w_\Delta^* H\Bigl(f-\sum_{l=0}^{\Delta-1}z^l\lambda_0\mf w_l\Bigr)
		+ \sum_{l=1}^\Delta z^l\bigl(\omega_l^+\hat f(s_+)_1 - \omega_l^-\hat f(s_-)_1\bigr)
		\\[1ex]
		&\quad + \Biggl(\frac{1}{2}\sum_{l=0}^{\Delta-1} d_l z^{l+1}
		+ \sum_{l=0}^{\Delta-1} d_{\Delta+l}z^{\Delta+l+1}
		- \sum_{l=1}^\oe b_{\oe+1-l}z^{2\Delta+l}\Biggr)\lambda_0.
	\end{align*}
	Together with \eqref{D106}, this implies \eqref{D107}.
\end{proof}

\begin{lemma}\label{D122}
	For $x\in(s_-,\sigma)$ we have
	\begin{align}
		&\hspace*{-5ex} z^{\Delta+1}\int_{s_-}^x \mf w_\Delta^*H\Bigl(f-\sum_{l=0}^{\Delta-1}z^l\lambda_0\mf w_l\Bigr)
		- \sum_{l=0}^{\Delta}z^l\omega_l^-\hat f(s_-)_1
		\nonumber\\[1ex]
		&= - \sum_{n=0}^\Delta z^n\bigl(\mf w_n(x)\bigr)^*J\biggl(\hat f(x)-\lambda_0\sum_{j=\Delta+1}^{2\Delta-n}z^j\mf w_j(x)\biggr),
		\label{D113}
	\end{align}
	and for $x\in(\sigma,s_+)$ we have
	\begin{align}
		&\hspace*{-5ex} z^{\Delta+1}\int_x^{s_+} \mf w_\Delta^*H\Bigl(f-\sum_{l=0}^{\Delta-1}z^l\lambda_0\mf w_l\Bigr)
		+ \sum_{l=0}^{\Delta}z^l\omega_l^+\hat f(s_+)_1
		\nonumber\\[1ex]
		&= \sum_{n=0}^\Delta z^n\bigl(\mf w_n(x)\bigr)^*J\biggl(\hat f(x)-\lambda_0\sum_{j=\Delta+1}^{2\Delta-n}z^j\mf w_j(x)\biggr).
		\label{D114}
	\end{align}
\end{lemma}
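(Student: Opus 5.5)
Both identities are proved by the same device: the left-hand side and the right-hand side of \eqref{D113} are locally absolutely continuous functions of $x$ on $[s_-,\sigma)$. I would show that they have the same derivative a.e.\ and the same limit at $x=s_-$. Formula \eqref{D114} then follows by the mirror argument on $(\sigma,s_+]$, using the integral operator $\tilde{\mc I}$ from \cref{D18}. The sign change comes from integrating from $x$ to $s_+$ instead of from $s_-$ to $x$.

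\textbf{Derivatives.} On $(s_-,\sigma)$ the representative satisfies $\hat f'=zJHf$, by \cref{D38}\,(i) together with \eqref{D173}. By \eqref{D64} we have $\mf w_n'=JH\mf w_{n-1}$ and $\mf w_{-1}=0$. Writing $h\DE\hat f-\lambda_0\sum_{j\ge\Delta+1}z^j\mf w_j$ (truncated at $2\Delta-n$), I would differentiate $\mf w_n^*Jh$ using $J^*J=I$ and $(JH\mf w_{n-1})^*J=-\mf w_{n-1}^*HJ^*J\cdot(-1)$. Concretely,
\[
	(\mf w_n^*Jh)'=\mf w_{n-1}^*Hh-\mf w_n^*H\Bigl(zf-\lambda_0\sum_j z^j\mf w_{j-1}\Bigr),
\]
where I am using that $H$ is real symmetric and $J^T=-J$.

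Multiplying by $z^n$ and summing over $n=0,\dots,\Delta$, the $\hat f$-terms telescope. The term $z^n\mf w_{n-1}^*H\hat f$ from index $n$ cancels $z\cdot z^{n-1}\mf w_{n-1}^*Hf$ from index $n-1$, leaving only $-z^{\Delta+1}\mf w_\Delta^*Hf$. Here one has to use that $H\hat f=Hf$ a.e.

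The polynomial parts must be checked similarly. One has to verify that the double sum $\sum_n\sum_{j}z^{n+j}(\mf w_{n-1}^*H\mf w_j-\mf w_n^*H\mf w_{j-1})$ collapses to $z^{\Delta+1}\mf w_\Delta^*H\lambda_0\sum_{l=0}^{\Delta-1}z^l\mf w_l$. The upper limit $2\Delta-n$ is chosen exactly so that the mixed terms $\mf w_a^*H\mf w_b$ with $a+b=2\Delta-1$ pair off via the symmetry $\mf w_a^*H\mf w_b=\mf w_b^*H\mf w_a$. This index bookkeeping is the main computational obstacle. I would organise it by grouping terms according to $a+b$ and checking that each group cancels or telescopes. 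The result should be that the derivative of the right-hand side of \eqref{D113} equals $z^{\Delta+1}\mf w_\Delta^*H\bigl(f-\sum_l z^l\lambda_0\mf w_l\bigr)$, which is the derivative of the left-hand side.

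\textbf{Value at $s_-$.} The left-hand side of \eqref{D113} at $x=s_-$ equals $-\sum_{l=0}^\Delta z^l\omega_l^-\hat f(s_-)_1$. For the right-hand side I would use $\mf w_n(s_-)=\binom0{\omega_n^-}$ from \eqref{D64}, which gives $\mf w_n(s_-)^*J=\ov{\omega_n^-}(-1,0)$. Since the $\omega_n^-$ are real (all data are real), the $\hat f$-part yields $-\sum_n z^n\omega_n^-\hat f(s_-)_1$. The $\mf w_j$-parts contribute $\mf w_n(s_-)^*J\mf w_j(s_-)=0$, because both vectors are multiples of $\binom01$. So both sides agree at $s_-$, and since they have the same derivative they coincide on $(s_-,\sigma)$. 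Reality of the $\omega_n$ should be noted: it follows from uniqueness in \cref{D15}, since $H$ is real.
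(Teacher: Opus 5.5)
Your proposal is correct, and it takes a different route from the paper. The paper derives the identity constructively. It applies Green's identity \eqref{D108} on $(s_-,x)$ repeatedly (the induction \eqref{D111}), each step lowering the index of the $\mf w$ inside the integral by one and producing one boundary term at $x$. This yields \eqref{D112}, whose inner sum runs over $j=\Delta-n+1,\dots,2\Delta-n$. A separate antisymmetry argument ($\mf w_n^TJ\mf w_j=-\mf w_j^TJ\mf w_n$ over a symmetric index set) is then needed to discard the terms with $j\le\Delta$. You instead take the already truncated expression and verify it by differentiation plus comparison at $s_-$, which amounts to Green's identity in differential form used once. As a verification this is shorter and avoids the antisymmetry step. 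The paper's route, however, shows where the regularising terms come from. Both routes need the $\mf w_n$ to be real-valued; you justify this correctly via uniqueness in \cref{D15}, whereas the paper uses it tacitly.

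Two details in your write-up need correcting, though neither affects the conclusion.

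First, the bookkeeping you flag as the main obstacle is immediate, and the mechanism is not symmetry pairing. Substitute $a=n-1$, $b=j$ in the first double sum and $a=n$, $b=j-1$ in the second. Both become sums of $z^{a+b+1}\mf w_a^*H\mf w_b$ over $0\le a\le\Delta-1$ with the same upper bound $b\le 2\Delta-1-a$; this matching is exactly what the truncation at $2\Delta-n$ achieves. One sum starts at $b=\Delta+1$ and the other at $b=\Delta$, so they cancel termwise and leave $\lambda_0z^{\Delta+1}\sum_{l=0}^{\Delta-1}z^l\mf w_l^*H\mf w_\Delta$. No $(a,b)\leftrightarrow(b,a)$ pairing occurs, and none could, since always $a<\Delta\le b$. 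Symmetry enters only to rewrite $\mf w_l^*H\mf w_\Delta=\mf w_\Delta^*H\mf w_l$ so that it matches the left-hand side.

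Second, $\binom{0}{\omega}^*J=\ov{\omega}\,(1,0)$, not $\ov{\omega}\,(-1,0)$. With your sign, the right-hand side of \eqref{D113} at $s_-$ would be $+\sum_n z^n\omega_n^-\hat f(s_-)_1$. With the correct sign it is $-\sum_n z^n\omega_n^-\hat f(s_-)_1$, as you state, and it matches the left-hand side.
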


\begin{proof}
	We only prove \eqref{D113}; the proof of \eqref{D114} is similar.
	Take an arbitrary $x\in(s_-,\sigma)$.
	First note that we can use the representative $\hat f$ instead of $f$ in the integral on the left-hand side of \eqref{D113}.
	Using induction we prove the following relation for $k\in\{0,\ldots,\Delta+1\}$:
	\begin{align}
		&z^{\Delta+1}\int_{s_-}^x\mf w_\Delta^*H\Bigl(\hat f-\sum_{l=0}^{\Delta-1}z^l\lambda_0\mf w_l\Bigr)
		- \sum_{l=0}^{\Delta}z^l\omega_l^-\hat f(s_-)_1
		\nonumber\\[1ex]
		&= z^{\Delta-k+1}\int_{s_-}^x \mf w_{\Delta-k}^*H
		\biggl(\hat f-\lambda_0\sum_{l=0}^{\Delta-1}z^{l+k}\mf w_{l+k}\biggr) 
		- \sum_{l=0}^{\Delta-k}z^l\omega_l^-\hat f(s_-)_1
		\nonumber\\[1ex]
		&\quad - \sum_{j=1}^k z^{\Delta-j+1}\bigl(\mf w_{\Delta-j+1}(x)\bigr)^*J
		\biggl(\hat f(x)-\lambda_0\sum_{l=0}^{\Delta-1}z^{l+j}\mf w_{l+j}(x)\biggr),
		\label{D111}
	\end{align}
	where we use $\mf w_{-1}=0$.
	For $k=0$ this is trivial.
	Assume now that \eqref{D111} holds for some $k$.
	We apply Green's identity \eqref{D108} on the interval $(s_-,x)$ and use \eqref{D64} to obtain
	\begin{align*}
		&z^{\Delta+1}\int_{s_-}^x\mf w_\Delta^* H\Bigl(\hat f-\sum_{l=0}^{\Delta-1}z^l\lambda_0\mf w_l\Bigr)
		- \sum_{l=0}^{\Delta}z^l\omega_l^-\hat f(s_-)_1
		\\[1ex]
		&= z^{\Delta-k}\int_{s_-}^x \mf w_{\Delta-k}^* H
		\biggl(z\hat f-\lambda_0\sum_{l=0}^{\Delta-1}z^{l+k+1}\mf w_{l+k}\biggr) 
		- \sum_{l=0}^{\Delta-k}z^l\omega_l^-\hat f(s_-)_1
		\\[1ex]
		&\quad - \sum_{j=1}^k z^{\Delta-j+1}\bigl(\mf w_{\Delta-j+1}(x)\bigr)^*J
		\biggl(\hat f(x)-\lambda_0\sum_{l=0}^{\Delta-1}z^{l+j}\mf w_{l+j}(x)\biggr)
		\displaybreak[0]\\[1ex]
		&= z^{\Delta-k}\biggl[\int_{s_-}^x \mf w_{\Delta-k-1}^* H
		\biggl(\hat f-\lambda_0\sum_{l=0}^{\Delta-1}z^{l+k+1}\mf w_{l+k+1}\biggr)
		\\[1ex]
		&\quad - \bigl(\mf w_{\Delta-k}(x)\bigr)^*J\biggl(\hat f(x)-\lambda_0\sum_{l=0}^{\Delta-1}z^{l+k+1}\mf w_{l+k+1}(x)\biggr)
		\\[1ex]
		&\quad + \bigl(\mf w_{\Delta-k}(s_-)\bigr)^*J\biggl(\hat f(s_-)-\lambda_0\sum_{l=0}^{\Delta-1}z^{l+k+1}\mf w_{l+k+1}(s_-)\biggr)\biggr]
		\\[1ex]
		&\quad - \sum_{l=0}^{\Delta-k}z^l\omega_l^-\hat f(s_-)_1
		- \sum_{j=1}^k z^{\Delta-j+1}\bigl(\mf w_{\Delta-j+1}(x)\bigr)^*J
		\biggl(\hat f(x)-\lambda_0\sum_{l=0}^{\Delta-1}z^{l+j}\mf w_{l+j}(x)\biggr),
	\end{align*}
	which equals the right-hand side of \eqref{D111} with $k$ replaced by $k+1$
	since $(\mf w_n(s_-))^*J\mf w_m(s_-)=0$ for all $n,m\in\bb N_0$ by \eqref{D64}.
	Hence \eqref{D111} holds for all $k\in\{0,\ldots,\Delta+1\}$.
	For $k=\Delta+1$ we obtain
	\begin{align}
		&z^{\Delta+1}\int_{s_-}^x\mf w_\Delta^*H\Bigl(\hat f-\sum_{l=0}^{\Delta-1}z^l\lambda_0\mf w_l\Bigr)
		- \sum_{l=0}^{\Delta}z^l\omega_l^-\hat f(s_-)_1
		\nonumber\\[1ex]
		&= - \sum_{j=1}^{\Delta+1} z^{\Delta-j+1}\bigl(\mf w_{\Delta-j+1}(x)\bigr)^*J
		\biggl(\hat f(x)-\lambda_0\sum_{l=0}^{\Delta-1}z^{l+j}\mf w_{l+j}(x)\biggr)
		\nonumber\\[1ex]
		&= -\sum_{n=0}^\Delta z^n\bigl(\mf w_n(x)\bigr)^*J
		\biggl(\hat f(x)-\lambda_0\sum_{l=0}^{\Delta-1}z^{\Delta-n+1+l}\mf w_{\Delta-n+1+l}(x)\biggr)
		\nonumber\\[1ex]
		&= - \sum_{n=0}^\Delta z^n\bigl(\mf w_n(x)\bigr)^*J\biggl(\hat f(x)-\lambda_0\sum_{j=\Delta-n+1}^{2\Delta-n}z^j\mf w_j(x)\biggr).
		\label{D112}
	\end{align}
	Since $J^* = -J$, we have
	\[
		\bigl(\mf w_n(x)\bigr)^*J\mf w_j(x)+\bigl(\mf w_j(x)\bigr)^*J\mf w_n(x) = 0
	\]
	for $k,l\in\bb N_0$, and hence
	\begin{align*}
		&\sum_{n=0}^\Delta\hspace*{1ex}\sum_{j=\Delta+1-n}^\Delta
		z^{j+n}\bigl(\mf w_n(x)\bigr)^*J\mf w_j(x)
		= \sum_{\substack{1\le j,n \le \Delta \\[0.5ex] \Delta+1\le j+n\le 2\Delta}}
		z^{j+n}\bigl(\mf w_n(x)\bigr)^*J\mf w_j(x)
		\\[1ex]
		&= \frac{1}{2}\hspace*{-2ex}
		\sum_{\substack{1\le j,n\le \Delta \\[0.5ex] \Delta+1\le j+n\le2\Delta}}
		z^{j+n}\Bigl(\bigl(\mf w_n(x)\bigr)^*J\mf w_j(x)
		+ \bigl(\mf w_j(x)\bigr)^*J\mf w_n(x)\Bigr)
		= 0.
	\end{align*}
	Together with \eqref{D112} we obtain \eqref{D113}.
\end{proof}

\begin{lemma}\label{D165}
	Assume that the interval $(s_-,\sigma)$ is indivisible.
	Then
	\[
		\mf w_0(t) = \binom01, \qquad \mf w_1(t) = \begin{pmatrix} -\int_{s_-}^t h_2(s)\DD s \\[0.5ex] 0 \end{pmatrix},
		\qquad \mf w_n(t) = 0 \quad\text{for} \ n\ge2,
	\]
	for $t\in[s_-,\sigma)$, and $\omega_n^-=0$ for $n\ge1$.
	Moreover, let $c=\binom{c_1}{c_2}\in\bb C^2$ and let $\hat f_-$ be the unique solution of \eqref{D2} on $[s_-,\sigma)$
	that satisfies 
	\begin{equation}\label{D181}
		\hat f_-(s_-)=c.
	\end{equation}
	Then
	\[
		\hat f_-(t) = \begin{pmatrix} c_1-zc_2\int_{s_-}^t h_2(s)\DD s \\[0.5ex] c_2 \end{pmatrix}
		= c_2\mf w_0(t) + zc_2\mf w_1(t) + c_1\binom10
	\]
	and hence $\lambda_0=c_2$ with $\lambda_0$ as in \eqref{D35} and
	\begin{equation}\label{D166}
		\Gamma^-(z)\hat f_- = c.
	\end{equation}
	The same statement holds when we consider the interval $(\sigma,s_+)$ instead of $(s_-,\sigma)$ and the corresponding
	quantities $\omega_n^+$ and $\hat f_+$.
\end{lemma}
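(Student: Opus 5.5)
We first compute the functions $\mf w_n$ on $[s_-,\sigma)$ and then read off all the remaining claims. Since $(s_-,\sigma)$ is indivisible and $H_-\in\HamCP(s_-,\sigma)$ satisfies {\sf(I)}, the type of the interval must be $\frac\pi2$. Indeed, for any other type $\phi$ the direction $\binom10$ would see the non-integrable part of $H_-$, or else $H_-$ would be integrable, contradicting limit point at $\sigma$. Hence $H_-=h_2\binom01\binom01^*$ with $h_1=h_3=0$, so $H_-$ is diagonal and \cref{D63}\,(ii) applies. From \eqref{D147}--\eqref{D148} we get $\sf w_0=1$ and $\sf w_1(t)=-\int_{s_-}^th_2$. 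Next, $\sf w_2(t)=-\int_t^\sigma h_1\sf w_1=0$ because $h_1=0$, and inductively $\sf w_n=0$ for $n\ge2$. Then \eqref{D149} gives the stated formulas for $\mf w_0,\mf w_1,\mf w_n$. The values $\omega_n^-$ are read off from $\mf w_n(s_-)=\binom0{\omega_n^-}$ by \eqref{D64}. For $n$ odd, $\mf w_n(s_-)$ has vanishing second component, since $\mf w_n$ is of the form $\binom{\ast}{0}$. For $n\ge2$ even, $\mf w_n=0$. Thus $\omega_n^-=0$ for $n\ge1$.

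Next we solve \eqref{D2} with $H_-=\mathrm{diag}(0,h_2)$. Since $JH_-y=\binom{-h_2y_2}{0}$, the second component is constant, $\hat f_-(t)_2=c_2$, and $\hat f_-(t)_1=c_1-zc_2\int_{s_-}^th_2$. This is exactly $c_2\mf w_0+zc_2\mf w_1+c_1\binom10$. To identify $\lambda_0$, we note that $\binom10\in L^2(H_-)$: it is $=_{H_-}$-equivalent to $0$, since $H_-\binom10=0$. The function $\mf w_1$ is a multiple of $\binom10$, so also $\mf w_1=_{H_-}0$. Consequently, in the decomposition \eqref{D35} on the left part, $f_-$ differs from $c_2\mf w_0$ by an element of $L^2(H_-)$. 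By the linear independence of $\mf w_0,\dots,\mf w_{\Delta-1}$ modulo $L^2(H)$ (\cref{D32}), the coefficient of $\mf w_0$ in $f$ is therefore $c_2$, i.e.\ $\lambda_0=c_2$.

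Finally we compute $\Gamma^-(z)\hat f_-$. Directly from \eqref{D134}, $\Gammar^-\hat f_-=\lim_{x\to\sigma-}\hat f_-(x)_2=c_2$. For \eqref{D135}, all $\mf w_j$ with $j\ge\Delta+1\ge2$ vanish, so the correction term vanishes. The sum also reduces to $n=0,1$, since $\Delta\ge1$ and $\mf w_n=0$ for $n\ge2$. This leaves
\[
	\mf w_0^*J\hat f_-+z\,\mf w_1^*J\hat f_-.
\]
With $\mf w_0^*J=(1,0)$, the first term is $\hat f_-(x)_1=c_1-zc_2\int_{s_-}^xh_2$. For the second term, $\mf w_1^*J\hat f_-=-\ov{\mf w_1}_1\,\hat f_-(x)_2$. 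Here we must be careful: in the definition the factor is $\bigl(\mf w_n(x)\bigr)^*$, but $\mf w_1$ is real-valued, so the conjugation is harmless. This gives $z\,c_2\int_{s_-}^xh_2$. The two $h_2$-integrals cancel, so the expression is constant and equal to $c_1$, whence the limit exists and $\Gammas^-(z)\hat f_-=c_1$. This proves \eqref{D166}. The case of $(\sigma,s_+)$ follows by the same computation using \cref{D18}, with integrals $\int_t^{s_+}$ and the corresponding sign flips.

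The only delicate points are the justification that the type is $\frac\pi2$ and the correct identification of $\lambda_0$ via \cref{D32}. The sign bookkeeping with $J$ in the final cancellation is routine.
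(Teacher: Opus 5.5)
Your proposal is correct and follows the paper's route: the paper calls everything except \eqref{D166} easy to check and then carries out exactly your computation, in which the sum in \eqref{D135} reduces to the terms $n=0,1$ and the two $h_2$-integrals cancel to leave $c_1$. One small correction: for $\lambda_0=c_2$ you should not cite \cref{D32}, which is about independence modulo $L^2(H_+)\oplus L^2(H_-)$ of functions living on both sides; what you need is the one-sided fact that $\mf w_0|_{(s_-,\sigma)}=\binom01\notin L^2(H_-)$ because $\int_{s_-}^\sigma h_2=\infty$, and together with $\mf w_i|_{(s_-,\sigma)}=_{H_-}0$ for $i\ge1$ this forces $\lambda_0=c_2$.
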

\begin{proof}
	Most statements are easy to check.  We only prove \eqref{D166}.  
	Since $\mf w_n=0$ for $n\ge2$, we have, for $x\in(s_-,\sigma)$, (note also \cref{D119}\,(ii))
	\begin{align*}
		& \sum_{n=0}^\Delta z^n\bigl(\mf w_n(x)\bigr)^*J
		\biggl(\hat f_-(x)-(\Gammar^- \hat f_-)\sum_{j=\Delta+1}^{2\Delta-n}z^j\mf w_j(x)\biggr)
		\\[1ex]
		&= \sum_{n=0}^1 z^n\bigl(\mf w_n(x)\bigr)^*J\hat f_-(x)
		\\[1ex]
		&= \begin{pmatrix} -z\int_{s_-}^x h_2(s)\DD s \\[1ex] 1 \end{pmatrix}^*
		J\begin{pmatrix} c_1-zc_2\int_{s_-}^t h_2(s)\DD s \\[0.5ex] c_2 \end{pmatrix}
		= c_1,
	\end{align*}
	which, together with \eqref{D135} proves \eqref{D166}.
\end{proof}

\begin{Remark}\label{D174}
	For some considerations it is useful to replace either $H_-$ or $H_+$ with a Hamiltonian
	so that the corresponding interval becomes indivisible.
	Assume that $(s_-,\sigma)$ is not indivisible.
	Let $\mr H_+(t)\DE\bigl(\begin{smallmatrix} 0 & 0 \\ 0 & h_2(t) \end{smallmatrix}\bigr)$,
	$t\in(\sigma,s_+)$, where $h_2$ is not integrable at $\sigma$, set $H^{-,0}\DE(H_-,\mr H_+)$ 
	and define the elementary indefinite Hamiltonian of kind (A) $\mf h^{-,0}\DE(H^{-,0};0,0;0)$.
	Note that $\Delta(H^{-,0})=\Delta(H_-)$ and $\ms p_{-,0}=0$ with $\ms p_{-,0}$ defined as in \eqref{D176}.
	If $\Delta=\Delta(H_-)\ge\Delta(H_+)$, then the generalised boundary mappings $\Gammas^-(z)$
	for $\mf h$ and $\mf h^{-,0}$ coincide.

	In a similar way one can define an elementary indefinite Hamiltonian of kind (A) $\mf h^{+,0}$ by replacing $H_-$
	with a Hamiltonian of the form $\bigl(\begin{smallmatrix} 0 & 0 \\ 0 & h_2(t) \end{smallmatrix}\bigr)$ 
	under the assumption that $(\sigma,s_+)$ is not indivisible.
\end{Remark}

\begin{proof}[Proof of \cref{D115}]
	If $(s_-,\sigma)$ is not indivisible, then $f$ is uniquely determined on $(s_-,\sigma)$ 
	and satisfies \eqref{D130} by \cref{D72} and \eqref{D60}.
	If $(s_-,\sigma)$ is indivisible, then $f_2$ is uniquely determined and, with the notation from \cref{D165} 
	and by \eqref{D60}, satisfies
	\begin{equation}\label{D175}
		\hat f(s_-)_2 = \hat f(x)_2 = c_2 = \lambda_0 = \mrr\Gamma(F;zF)(s_-)_2
	\end{equation}
	for $x\in[s_-,\sigma)$;
	moreover, $f_1$ is unique up to an additive constant by \cref{D165}, which can be chosen so that \eqref{D130} holds.

	It follows from \cite[Theorem~4.21\,(iv)]{langer.woracek:gpinf} that $\lim_{x\to\sigma}\hat f(x)_2$ exists,
	which proves \eqref{D136}.
	It follows from \eqref{D175} that $\Gammar^-\hat f_-=\lambda_0$ if $(s_-,\sigma)$ is indivisible
	and that $\Gammar^+\hat f_+=\lambda_0$ if $(\sigma,s_+)$ is indivisible.
	If neither $(s_-,\sigma)$ nor $(\sigma,s_+)$ is indivisible, then
	we use the Hamiltonian $\mf h^{-,0}$ from \cref{D174} to obtain $\Gammar^-\hat f_-=\Gammar^+\hat f_+=\lambda_0$ 
	by what we have already proved since $\Gammar^\pm$ is independent of $H$.
	
	Finally, we prove \eqref{D137}. It follows from \cref{D121,D122} that
	\begin{align*}
		& \lambda_0\ms p(z) + \mrr\Gamma(F;zF)(s_-)_1 - \mrr\Gamma(F;zF)(s_+)_1
		\\[1ex]
		&= \lim_{x\to\sigma^-} z^{\Delta+1}\int_{s_-}^x \mf w_\Delta^*H\Bigl(f-\sum_{l=0}^{\Delta-1}z^l\lambda_0\mf w_l\Bigr)
		- \sum_{l=0}^{\Delta}z^l\omega_l^-\hat f(s_-)_1 + \hat f_1(s_-)_1
		\\
		&\quad + \lim_{x\to\sigma^+} z^{\Delta+1}\int_x^{s_+} \mf w_\Delta^*H\Bigl(f-\sum_{l=0}^{\Delta-1}z^l\lambda_0\mf w_l\Bigr)
		+ \sum_{l=0}^{\Delta}z^l\omega_l^+\hat f(s_+)_1 - \hat f(s_+)_1
		\\[1ex]
		&= -\lim_{x\to\sigma^-}\sum_{n=0}^\Delta z^n\bigl(\mf w_n(x)\bigr)^*J
		\biggl(\hat f(x)-\lambda_0\sum_{j=\Delta+1}^{2\Delta-n}z^j\mf w_j(x)\biggr) + \hat f_1(s_-)_1
		\\
		&\quad + \lim_{x\to\sigma^+}\sum_{n=0}^\Delta z^n\bigl(\mf w_n(x)\bigr)^*J
		\biggl(\hat f(x)-\lambda_0\sum_{j=\Delta+1}^{2\Delta-n}z^j\mf w_j(x)\biggr) - \hat f_1(s_+)_1,
	\end{align*}
	which, together with $\lambda_0=\Gammar^-\hat f_-$, proves \eqref{D137}.
\end{proof}

\medskip

\noindent
The next theorem shows that the generalised boundary mappings $\Gamma^\pm(z)$ are bijective from 
the set of locally absolutely continuous solutions of \eqref{D2} on $[s_-,\sigma)$ and $(\sigma,s_+]$ respectively
onto $\bb C^2$.

\begin{theorem}\label{D164}
Let $c\in\bb C^2$ and $z\in\bb C$.
\begin{Enumerate}
\item
	There exists a unique locally absolutely continuous solution $\hat f_-$ of \eqref{D2} on $(s_-,\sigma)$ such that $\Gamma^-(z)\hat f_-=c$.
\item
	There exists a unique locally absolutely continuous solution $\hat f_+$ of \eqref{D2} on $(\sigma,s_+)$ such that $\Gamma^+(z)\hat f_+=c$.
\end{Enumerate}
\end{theorem}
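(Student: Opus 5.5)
Both statements have the same shape, and I would prove them by identifying each solution space with the two-dimensional space $\ker(\mrr T(\mf h)-z)$, where \cref{D115} supplies the boundary values. The solutions of \eqref{D2} on $(s_-,\sigma)$ form a two-dimensional space, parametrised by $\hat f_-(s_-)\in\bb C^2$ because $H_-$ is in limit circle case at $s_-$. The first step is to show that $\Gamma^-(z)\hat f_-$ is defined for \emph{every} such solution.

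By \cref{D39}\,(ii), for every $c\in\bb C^2$ there is a unique $F\in\ker(\mrr T(\mf h)-z)$ with $\mrr\Gamma(F;zF)(s_-)=c$. By \cref{D115}\,(i), the left part $\hat f_-$ of a suitable representative of $F$ satisfies $\hat f_-(s_-)=c$. Hence every solution on $[s_-,\sigma)$ is the left part of an eigenelement, the limits \eqref{D134}, \eqref{D135} exist, and $\hat f_-\mapsto\Gamma^-(z)\hat f_-$ is a linear map between two-dimensional spaces. So it suffices to prove injectivity; the same holds on the right.

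For the bijections I would use the maps
\[
	L\DF\ker(\mrr T(\mf h)-z)\to\{\text{solutions on }(s_-,\sigma)\},\qquad R_+\DF\ker(\mrr T(\mf h)-z)\to\{\text{solutions on }(\sigma,s_+)\},
\]
taking the representative normalised by \eqref{D130} and \eqref{D131} respectively. By \cref{D39}\,(ii) and \cref{D115}\,(i),(ii), both $L$ and $R_+$ are bijective: an eigenelement is determined by its boundary value at $s_-$ (respectively $s_+$), which is exactly $\hat f_-(s_-)$ (respectively $\hat f_+(s_+)$). With these normalisations, the correction terms in \eqref{D137} vanish, as noted in \cref{D119}\,(i). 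Hence \eqref{D177} gives
\[
	\Gamma^+(z)\circ R_+=\ms R(z)\,\Gamma^-(z)\circ L .
\]
Since $\ms R(z)$ is invertible, $\Gamma^-(z)$ is bijective if and only if $\Gamma^+(z)$ is, so it suffices to treat one side. I would choose the side where $\Delta(H_\pm)=\Delta$; this is possible because $\Delta=\max\{\Delta(H_-),\Delta(H_+)\}$.

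Say $\Delta(H_-)=\Delta$; the other case is symmetric, using $\mf h^{+,0}$. If $(s_-,\sigma)$ is indivisible, \cref{D165} gives $\Gamma^-(z)\hat f_-=\hat f_-(s_-)$, which is obviously bijective. Otherwise I would pass to $\mf h^{-,0}$ from \cref{D174}. There $\Gamma_{\textup{\textsf{s}}}^-(z)$ is unchanged because $\Delta(H_-)\ge\Delta(H_+)$, and $\Gamma_{\textup{\textsf{r}}}^-$ is independent of $H$, so $\Gamma^-(z)$ for $\mf h$ coincides with that for $\mf h^{-,0}$. For $\mf h^{-,0}$ we have $\ms p_{-,0}=0$, and the right interval is indivisible, so \cref{D165} applies there. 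Running the displayed relation for $\mf h^{-,0}$ gives, for the eigenelement $F$ of $\mrr T(\mf h^{-,0})$ with left part $\hat f_-$,
\[
	\Gamma^-(z)\hat f_-=\Gamma^+(z)\hat f_+=\hat f_+(s_+)=\mrr\Gamma(F;zF)(s_+).
\]
If this vanishes, the uniqueness statement in \cref{D39}\,(ii) for $\mf h^{-,0}$ forces $F=0$, hence $\hat f_-=0$. This gives injectivity, hence bijectivity on this side. The transfer above then yields bijectivity on the other side for the original $\mf h$.

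The main obstacle I expect is the bookkeeping that makes the auxiliary-Hamiltonian trick legitimate. One must check that $\mf h^{-,0}$ is a genuine elementary indefinite Hamiltonian of kind (A), so that \cref{D39}\,(ii) and \cref{D115} apply to it. One must also check that $\Gamma^-(z)$ really does not change when $\Delta$ is computed from $(H_-,\mr H_+)$ rather than $(H_-,H_+)$; this is exactly why the side with $\Delta(H_\pm)=\Delta$ has to be chosen first. A second point needing care is that in the indivisible cases the normalised representatives in $L$ and $R_+$ are still bijective onto the solution spaces, which follows from \cref{D165}.
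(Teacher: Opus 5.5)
Your proof is correct and follows essentially the paper's route: bijectivity on the side where $\Delta(H_\pm)=\Delta$ via the auxiliary Hamiltonian $\mf h^{-,0}$ (resp.\ $\mf h^{+,0}$), transfer to the other side through $\Gamma^+(z)\hat f_+=\ms R(z)\Gamma^-(z)\hat f_-$ together with \cref{D39}\,(ii), and a dimension count. The only real difference is that you prove injectivity and get surjectivity from dimensions, whereas the paper constructs the solutions and gets uniqueness from dimensions; in addition, you explicitly check that $\Gamma^\pm(z)$ is defined and linear on the whole solution space, and you treat an indivisible $(s_-,\sigma)$ separately via \cref{D165} (a case where \cref{D174} does not supply $\mf h^{-,0}$), which makes precise two points the paper leaves implicit.
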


\begin{proof}
	During this proof we use the notation $\Gamma(\mf h)^\pm(z)$ instead of $\Gamma^\pm(z)$ to indicate the 
	dependence on the elementary indefinite Hamiltonian of kind (A) $\mf h$.
	
	Let us first prove existence in (i) in the case when $\Delta=\Delta(H_-)\ge\Delta(H_+)$.
	We consider the elementary indefinite Hamiltonian of kind (A) $\mf h^{-,0}$ from \cref{D174}.
	By \cref{D39}\,(ii) there exists a unique $F\in\ker\bigl(\mrr T(\mf h^{-,0})-z\bigr)$ 
	such that $\mrr\Gamma(\mf h^{-,0})(F;zF)(s_+)=c$.
	Further, choose the solution $\hat f_+$ such that $\hat f_+(s_+)=c$, which can be done by \cref{D115}\,(ii).
	Since $\Delta(H^{-,0})=\Delta$ and the Hamiltonians on $(s_-,\sigma)$ are the same for $\mf h$ and $\mf h^{-,0}$, 
	we have $\Gamma(\mf h)^-(z)=\Gamma(\mf h^{-,0})^-(z)$.
	Hence, we obtain from \eqref{D177}, \eqref{D181} and \eqref{D166} that
	\[
		\Gamma(\mf h)^-(z)\hat f_- = \Gamma(\mf h^{-,0})^-(z)\hat f_-
		= \Gamma(\mf h^{-,0})^+(z)\hat f_+ = \hat f^+(s_+) = c.
	\]
	The proof of the existence in (ii) when $\Delta=\Delta(H_+)$ is similar.
	
	Let us now prove existence in (i) when $\Delta=\Delta(H_+)>\Delta(H_-)$.  Set $d\DE\ms R(z)c$.
	It follows from what we have already proved that there exists a solution $\hat f_+$ of \eqref{D2} on $(\sigma,s_+)$
	such that $\Gamma^+(z)\hat f_+=d$.
	By \cref{D39}\,(ii) there exists $F=(f,\upxi,\upalpha)\in\ker(\mrr T(\mf h)-z)$ with $\mrr\Gamma(\mf h)(F;zF)(s_+)=\hat f_+(s_+)$.
	Since $\Delta(H_+)>1$, the interval $(\sigma,s_+)$ is not indivisible and hence $\hat f_+$ 
	is the unique locally absolutely continuous representative of $f_+$.
	Let $\hat f_-$ be the locally absolutely continuous representative of $f_-$ 
	that satisfies $\hat f_-(s_-)=\mrr\Gamma(\mf h)(F;zF)(s_-)$.
	It follows from \eqref{D177} that
	\[
		\Gamma^-(z)\hat f_- = (\ms R(z))^{-1}\Gamma^+(z)\hat f_+
		= (\ms R(z))^{-1}d = c.
	\]
	The proof of (ii) in the case when $\Delta>\Delta(H_+)$ is analogous.
	
	Finally, uniqueness follows from the fact that the space of solutions of \eqref{D2} 
	on $(s_-,\sigma)$ (or $(\sigma,s_+)$ respectively) is two-dimensional.
%
\end{proof}

\subsection{Factorisation of the monodromy matrix}
\label{D160}

Let us denote the entries of $W_{\mf h}(x,z)$ by $w_{ij}(x,z)$, $i,j=1,2$.
The transposes of the rows of $W_{\mf h}(\Dummy,z)$, i.e.\ $\binom{w_{i1}(\Dummy,z)}{w_{i2}(\Dummy,z)}$, $i=1,2$,
are solutions of \eqref{D2} on $(s_-,\sigma)$ and $(\sigma,s_+)$.
By this property and $W_{\mf h}(s_-,z)=I$ the matrix $W_{\mf h}(t,z)$ is determined on the interval $[s_-,\sigma)$, but not on the
interval $(\sigma,s_+]$. 

In the next theorem, the main result of the paper, we construct $W_{\mf h}$ 
to the right of the singularity using an interface condition relating regularised boundary values. 
Let us first introduce some notation.  
Set $\mf e_1=\binom10$, $\mf e_2=\binom01$. Given a matrix $M(t,z)$ whose columns are solutions of \eqref{D2} on the interval
$[s_-,\sigma)$ or $(\sigma,s_+]$ respectively, define 
\[
	\tilde\Gamma^\pm(z)M(\Dummy,z) \DE
	\Big( \Gamma^\pm(z)\big(M(\Dummy,z)\mf e_1\big)\quad \Gamma^\pm(z)\big(M(\Dummy,z)\mf e_2\big) \Big)\in\bb C^{2\times 2}
\]
for $z\in\bb C$.

\begin{theorem}\label{D123}
	Let $V(\Dummy,z)$ be a solution of \eqref{D70} on $(\sigma,s_+]$ such that $V(t,z)$ is non-singular
	for all $t\in(\sigma,s_+]$ and $z\in\bb C$.
	Further, set
	\begin{equation}\label{D178}
		U^-(z) \DE \bigl[\tilde\Gamma^-(z)W_{\mf h}(\Dummy,z)^T\bigr]^T, \qquad
		U^+(z) \DE \bigl[\tilde\Gamma^+(z)V(\Dummy,z)^T\bigr]^T.
	\end{equation}
	Then
	\begin{equation}\label{D127}
		\det U^-(z) = 1, \qquad \det U^+(z) = \det V(t,z)
	\end{equation}
	and
	\begin{equation}\label{D124}
		W_{\mf h}(t,z) = U^-(z)\bigl(\ms R(z)\bigr)^T\bigl(U^+(z)\bigr)^{-1}V(t,z)
	\end{equation}
	for $z\in\bb C$ and $t\in(\sigma,s_+]$.
\end{theorem}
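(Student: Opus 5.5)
The plan is to derive everything from the interface relation \eqref{D177}, applied row by row to the maximal chain, and then to pass to an arbitrary $V$ using linearity of $\Gamma^\pm(z)$.

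\emph{Step 1: interface relation for the rows of $W_{\mf h}$.} Fix $z$ and $i\in\{1,2\}$. By \cref{D39}\,(ii) there is a unique $F_i=(f_i,\upxi,\upalpha)\in\ker(\mrr T(\mf h)-z)$ with $\mrr\Gamma(F_i;zF_i)(s_-)=\mf e_i$. I would invoke the construction of the maximal chain in \cite[\S5]{kaltenbaeck.woracek:p5db}. Together with the boundary triple property of $\bigl(\mrr{\mf P}(\mf h),\mrr T(\mf h),\mrr\Gamma(\mf h)\bigr)$, it should give the following: the representatives $\hat f_{i,\pm}$ fixed by \eqref{D130} and \eqref{D131} are exactly $t\mapsto W_{\mf h}(t,z)^T\mf e_i$, on $[s_-,\sigma)$ and on $(\sigma,s_+]$ respectively. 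On $[s_-,\sigma)$ this is immediate from $W_{\mf h}(s_-,z)=I$ and uniqueness. On $(\sigma,s_+]$ it amounts to the defining property $\mrr\Gamma(F;zF)(s_+)^T=\mrr\Gamma(F;zF)(s_-)^TW_{\mf h}(z)$, applied to $\mf h$ truncated at $t$. With these representatives the correction terms in \eqref{D137} vanish, so \eqref{D140} holds. By \cref{D138} we then get $\Gamma^+(z)\hat f_{i,+}=\ms R(z)\Gamma^-(z)\hat f_{i,-}$. Stacking the two rows and transposing gives
\[
	\bigl[\tilde\Gamma^+(z)W_{\mf h}(\Dummy,z)^T\bigr]^T=U^-(z)\ms R(z)^T .
\]

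\emph{Step 2: reduction to general $V$.} On $(\sigma,s_+]$ both $W_{\mf h}(\Dummy,z)$ and $V(\Dummy,z)$ solve \eqref{D70}, and $V$ is invertible. Differentiating $W_{\mf h}V^{-1}$ shows that it is constant in $t$, so $W_{\mf h}(t,z)=C(z)V(t,z)$. Since $\Gamma^+(z)$ is linear on solutions, $\tilde\Gamma^+(z)\bigl(V^TC^T\bigr)=\tilde\Gamma^+(z)(V^T)\,C^T$. Hence the left-hand side in Step 1 equals $C(z)U^+(z)$, and therefore $C(z)U^+(z)=U^-(z)\ms R(z)^T$. Once $U^+(z)$ is known to be invertible, this yields \eqref{D124}. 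Invertibility follows from \cref{D164}: $\Gamma^+(z)$ is a bijection from the two-dimensional solution space onto $\bb C^2$, and the columns of $V^T$ are linearly independent solutions.

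\emph{Step 3: determinants; the main obstacle.} Here I would prove a Wronskian identity for the regularised boundary values. For solutions $f,g$ of \eqref{D2} on $[s_-,\sigma)$ (and likewise on $(\sigma,s_+]$) the claim is
\[
	\bigl(\Gamma^-(z)g\bigr)^TJ\,\Gamma^-(z)f=g(x)^TJf(x),
\]
where the right-hand side is independent of $x$ because $J^T=-J$ and $H$ is symmetric. The left-hand side equals $\Gammas^-g\,\Gammar^-f-\Gammar^-g\,\Gammas^-f$. To prove the identity I would insert the defining limits \eqref{D134}, \eqref{D135} at a common point $x$. I would then rewrite $f(x)$ and $g(x)$ using the decomposition $f=\tilde f+\lambda_0\sum_{l}z^l\mf w_l$ from \cref{D120}, and control the remainder with the identities from \cref{D122} and the antisymmetry $\mf w_n^*J\mf w_j=-\mf w_j^*J\mf w_n$ used in its proof. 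This is the expected difficulty, because the individual terms diverge as $x\to\sigma$ and the cancellation must be tracked carefully.

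As a fallback, I would transfer the identity via $\mf h^{-,0}$ from \cref{D174}, where the other side is indivisible and \eqref{D166} identifies $\Gamma^+$ with the ordinary boundary value. Given the identity, $\det U^-(z)$ equals the Wronskian of the rows of $W_{\mf h}$, which is $\det W_{\mf h}(s_-,z)=1$. Likewise $\det U^+(z)=\det V(t,z)$. Taking determinants in \eqref{D124} then gives the consistency check $\det W_{\mf h}(t,z)=1$.
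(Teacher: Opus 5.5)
Your derivation of \eqref{D124} is correct and uses the same ingredients as the paper: the interface relation \eqref{D177} for the two elements $F^{(i)}\in\ker(\mrr T(\mf h)-z)$ with $\mrr\Gamma(F^{(i)};zF^{(i)})(s_-)=\mf e_i$, the characterisation $\mrr\Gamma(F;zF)(s_+)=W_{\mf h}(z)^T\mrr\Gamma(F;zF)(s_-)$, and \cref{D164}. The order differs. The paper defines the right-hand side $\tildeW$ and identifies its rows with $\hat f_+^{(i)}$ via the uniqueness in \cref{D164}\,(ii). You identify the rows of $W_{\mf h}$ first and pass to $V$ via $W_{\mf h}=CV$ and linearity of $\Gamma^+(z)$. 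Your explicit argument that $U^+(z)$ is invertible fills a point the paper leaves implicit.

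Drop the ``truncation at $t$''. With unchanged discrete data it is false: in \cref{D162} the value of $d_0$ giving $W_{\mf h}=W$ is $-\frac{s_+}{s_+-1}$, which depends on the right endpoint. It is also unnecessary. The rows of $W_{\mf h}(\Dummy,z)$ solve \eqref{D2} on $(\sigma,s_+]$, so the identity at $t=s_+$ plus uniqueness for the initial value problem at $s_+$ already gives $\hat f^{(i)}_+=W_{\mf h}(\Dummy,z)^T\mf e_i$.

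The genuine gap is \eqref{D127}. In your approach it rests entirely on the Wronskian identity $(\Gamma^\pm(z)g)^TJ\,\Gamma^\pm(z)f=g(x)^TJf(x)$, and you do not prove it. The direct route stops at ``the cancellation must be tracked carefully''. The fallback via $\mf h^{-,0}$ has two problems:
\begin{itemize}
\item It only reaches $\Gamma^-(z)$ when $\Delta(H_-)=\Delta$. Otherwise the sums in \eqref{D135} for $\mf h$ and $\mf h^{-,0}$ run to different $\Delta$, so the two maps differ (cf.\ \cref{D174}).
\item It tacitly uses that the bilinear Wronskian is preserved across $\sigma$, i.e.\ $\det W_{\mf h^{-,0}}=1$.
\end{itemize}
There is also a sign slip: $(\Gamma g)^TJ\Gamma f=\Gammar g\,\Gammas f-\Gammas g\,\Gammar f$. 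With your sign you would get $\det U^-=-1$.

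The paper avoids the identity. From \eqref{D124} and $\det W_{\mf h}=1$ it gets $1=\det U^-\det V/\det U^+$. It applies this to $\mf h^{-,0}$ with $\mr V(t,z)=\smmatrix{1}{0}{z\int_t^{s_+}h_2(s)\DD s}{1}$, for which $\mr U^+=I$ by \cref{D165}, and obtains $\det U^-=1$ when $\Delta(H_-)=\Delta$. Otherwise it uses $\mf h^{+,0}$, where $\mr U^-=I$.

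Your identity can also be proved directly in a few lines:
\begin{enumerate}
\item Every solution on $(s_-,\sigma)$ is a component of some $F\in\ker(\mrr T(\mf h)-z)$ by \cref{D39}\,(ii). The indivisible case is trivial by \eqref{D166}. By bilinearity and antisymmetry it suffices to take $u,v$ with $\Gammar^-u=0$ and $\Gammar^-v=1$.
\item Since $\Gammar^-\hat f_-=\lambda_0$ (proof of \cref{D115}), \cref{D38}\,(i) and \eqref{D117} give $u\in\Dom\Tmax(H_-)$ and $v=v_0+\sum_{l=0}^\Delta z^l\mf w_l$ with $v_0\in\Dom\Tmax(H_-)$.
\item Hence $v(x)^TJu(x)=\sum_{l=0}^\Delta z^l\mf w_l(x)^TJu(x)+v_0(x)^TJu(x)\to\Gammas^-(z)u$ as $x\to\sigma$. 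The first sum tends to $\Gammas^-(z)u$ by \eqref{D135}, using that the $\mf w_l$ are real. The last term tends to $0$ because the boundary form of $\Tmax(H_-)$ vanishes at the limit point end $\sigma$; apply this to $\ov u$ and $v_0$, using that $H$ is real.
\item This is exactly $(\Gamma^-u)^TJ\Gamma^-v=u^TJv$. The same argument works on $(\sigma,s_+)$, and then $\det U^-=1$ and $\det U^+=\det V$ follow as you intended.
\end{enumerate}
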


\begin{remark}\label{D171}
\rule{0ex}{1ex}
\begin{Enumerate}
\item
	The rows of $U^-(z)$ contain the transposes of the boundary values $\Gamma^-(z)\binom{w_{i1}(\Dummy,z)}{w_{i2}(\Dummy,z)}$
	of the transposes of the rows of $W_{\mf h}$.
\item
	The transposes of the rows of $(U^+(z))^{-1}V(\Dummy,z)$ are solutions of \eqref{D2} and their
	generalised boundary values $\Gamma^+(z)\Dummy$ are $\binom10$ and $\binom01$ respectively.
\item
	The matrices $U^-(z)$ and $(U^+(z))^{-1}V(t,z)$ depend only on $H$ whereas $\ms R(z)$ depends
	only on the discrete data $d_j$, $\oe$ and $b_j$.
\item
	The factorisation formula \eqref{D124} extends also to the case when $\mf h$ is an elementary indefinite 
	Hamiltonian of kind (B) or (C).  In this case both $(s_-,\sigma)$ and $(\sigma,s_+)$ are indivisible
	and hence $U^-(z)=I$ and $(U^+(z))^{-1}V(s_+,z)=I$ by \cref{D165}.
	In particular, $W_{\mf h}(s_+,z)=(\ms R(z))^T$, which coincides with the result 
	in \cite[Proposition~4.31]{kaltenbaeck.woracek:p5db} if one replaces $d_1$ by $-b_{\oe+1}$.
\end{Enumerate}
\end{remark}

\begin{proof}[Proof of \cref{D123}]
	We first show \eqref{D124}.
	Denote the right-hand side of \eqref{D124} by $\tildeW(t,z)$ for $t\in(\sigma,s_+]$
	and $z\in\bb C$.
	Let $i\in\{1,2\}$ and $z\in\bb C$ and let $F^{(i)}=(f^{(i)},\upxi^{(i)},\upalpha^{(i)})$
	be the unique element that satisfies 
	\[
		F^{(i)}\in\ker\bigl(\mrr T(\mf h)-z\bigr)
		\qquad\text{and}\qquad
		\mrr\Gamma(\mf h)(F^{(i)};zF^{(i)})(s_-)=\mf e_i;
	\]
	see \cref{D39}\,(ii).
	Further, let $\hat f\pm^{(i)}$ be the unique locally absolutely continuous representatives of $f_\pm^{(i)}$
	such that \eqref{D130} and \eqref{D131} hold.
	The transpose of the $i$th row of $W_{\mf h}$, $\hat y_-^{(i)}\DE W_{\mf h}(\Dummy,z)^T\mf e_i$,
	is a solution of \eqref{D2} on $(s_-,\sigma)$ and satisfies $\hat y_-^{(i)}(s_-)=\mf e_i$,
	and hence $\hat f_-^{(i)}=\hat y_-^{(i)}$.

	The transpose of the $i$th row of $\tildeW$, $\hat y^{(i)}_+=\tildeW(\Dummy,z)^T\mf e_i$,
	satisfies \eqref{D2} on $(\sigma,s_+)$, and for the generalised boundary values 
	we obtain from \eqref{D178} and \eqref{D177} that
	\begin{align*}
		\Gamma^+(z)\hat y_+^{(i)} 
		&= \tilde\Gamma^+(z)V(\Dummy,z)^T\bigl[\bigl(U^+(z)\bigr)^{-1}\bigr]^T
		\ms R(z)\bigl(U^-(z)\bigr)^T \mf e_i
		\\[1ex]
		&= \ms R(z)\tilde\Gamma^-(z)W_{\mf h}(\Dummy,z)^T \mf e_i
		= \ms R(z)\Gamma^-(z)\hat f_-^{(i)}
		= \Gamma^+(z)\hat f_+^{(i)}.
	\end{align*}
	The uniqueness statement of \cref{D164}\,(ii) implies that $\hat f_+^{(i)}=\hat y_+^{(i)}$
	and hence
	\[
		\mrr\Gamma(\mf h)(F^{(i)};zF^{(i)})(s_+) = \hat y_+^{(i)}(s_+) = \tildeW(s_+,z)^T\mf e_i.
	\]
	Together with \cite[Theorem~5.1 and Definition~4.3]{kaltenbaeck.woracek:p5db},
	this shows that $W_{\mf h}(s_+,z)=\tildeW(s_+,z)$, and hence \eqref{D124} holds for $t\in(\sigma,s_+]$ 
	since $\tildeW$ satisfies \eqref{D70}.
	
	Let us now show \eqref{D127}.
	Taking determinants on both sides of \eqref{D124} we obtain
	\begin{equation}\label{D179}
		1 = \det\bigl(U^-(z)\bigr)\frac{\det(V(t,z))}{\det(U^+(z))}.
	\end{equation}
	Assume first that $\Delta(H_-)=\Delta$.
	Consider the elementary indefinite Hamiltonian of kind (A) $\mf h^{-,0}$ from \cref{D174}
	and choose
	\[
		\mr V(t,z) = \begin{pmatrix} 1 & 0 \\[1ex] z\int_t^{s_+}h_2(s)\DD s & 1 \end{pmatrix}.
	\]
	Then $\mr U^+(z)=I$ and $\det(\mr V(t,z))=1$, and hence \eqref{D179} yields $\det(U^-(z))=1$.
	Now we can use \eqref{D179} with the given elementary indefinite Hamiltonian of kind (A) $\mf h$, which implies 
	the second relation in \eqref{D127}.
	
	Finally, let us consider the case when $\Delta(H_-)<\Delta$.
	We use the elementary indefinite Hamiltonian of kind (A) $\mf h^{+,0}$ from \cref{D174}, for which we have $\mr U^-(z)=I$,
	and hence \eqref{D179} implies $\det(U^+(z))=\det(V(t,z))$.
	With the given $\mf h$ we then obtain from \eqref{D179} that $\det(U^-(z))=1$.
\end{proof}

\medskip

\noindent
In the following corollary we compare the monodromy matrices for two different sets of discrete parameters
while keeping $H$ fixed.

\begin{corollary}\label{D180}
	Let two elementary indefinite Hamiltonians of kind (A), $\mf h_1$ and $\mf h_2$, be given whose
	Hamiltonians coincide, $H_1=H_2$, and with polynomials $\ms p_1$ and $\ms p_2$ respectively,
	and set
	\begin{equation}\label{D182}
		\ms M(z) \DE \lim_{x\to\sigma}\begin{pmatrix} w_{\mf h_1,12}(x,z) \\[1ex] w_{\mf h_1,22}(x,z) \end{pmatrix}
		\bigl(w_{\mf h_1,22}(x,z) \;\; -w_{\mf h_1,12}(x,z)\bigr).
	\end{equation}
	Then
	\begin{equation}\label{D183}
		W_{\mf h_2}(t,z)-W_{\mf h_1}(t,z) = \bigl(\ms p_2(z)-\ms p_1(z)\bigr)\ms M(z)W_{\mf h_1}(t,z)
	\end{equation}
	for $t\in(\sigma,s_+]$ and $z\in\bb C$.
\end{corollary}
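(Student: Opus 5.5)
Apply \cref{D123} to both $\mf h_1$ and $\mf h_2$ with the \emph{same} matrix $V(\Dummy,z)$. For instance, take $V(t,z)\DE W_{\mf h_1}(t,z)$ on $(\sigma,s_+]$; it solves \eqref{D70} and has determinant $1$. Since $H_1=H_2$, and $\Delta$ is determined by $H$, the generalised boundary mappings $\Gamma^\pm(z)$ coincide for $\mf h_1$ and $\mf h_2$. Moreover, $W_{\mf h_1}$ and $W_{\mf h_2}$ agree on $[s_-,\sigma)$, since there they are the unique solution of \eqref{D70}, \eqref{D71} with $H_-$. Hence $U^-(z)$ and $U^+(z)$ in \eqref{D178} are the same for both Hamiltonians, and \eqref{D124} gives
\[
	W_{\mf h_2}(t,z)-W_{\mf h_1}(t,z) = U^-(z)\bigl(\ms R_2(z)-\ms R_1(z)\bigr)^T\bigl(U^+(z)\bigr)^{-1}V(t,z).
\]
Here $(\ms R_2-\ms R_1)^T = (\ms p_2-\ms p_1)\smmatrix 0010$. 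With the choice $V=W_{\mf h_1}$ we have $(U^+)^{-1}V = (\ms R_1^T)^{-1}(U^-)^{-1}W_{\mf h_1}$, again by \eqref{D124} for $\mf h_1$. A direct computation gives $\smmatrix 0010(\ms R_1^T)^{-1}=\smmatrix 0010$, because $(\ms R_1^T)^{-1}=\smmatrix 10{-\ms p_1}1$ and its first row is $(1,0)$. Therefore
\[
	W_{\mf h_2}-W_{\mf h_1} = (\ms p_2-\ms p_1)\,U^-\smmatrix 0010 (U^-)^{-1}\,W_{\mf h_1}.
\]

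It remains to identify $\ms M(z)=U^-(z)\smmatrix 0010 (U^-(z))^{-1}$. Since $\det U^-=1$ by \eqref{D127}, write $U^-=\smmatrix abcd$, so that $(U^-)^{-1}=\smmatrix d{-b}{-c}a$. Then
\[
	U^-\smmatrix 0010 (U^-)^{-1}=\binom bd\,(d\;\;{-b}).
\]
By \cref{D171}\,(i), the $i$th row of $U^-$ is $\bigl(\Gamma^-(z)\hat y^{(i)}\bigr)^T$ with $\hat y^{(i)}=(w_{i1},w_{i2})^T$. So the second column $\binom bd$ consists of the regular boundary values $\Gammar^-\hat y^{(i)}=\lim_{x\to\sigma-}w_{i2}(x,z)$, which exist by \eqref{D134}. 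This gives exactly $\binom{b}{d}=\lim_{x\to\sigma}\binom{w_{\mf h_1,12}(x,z)}{w_{\mf h_1,22}(x,z)}$.

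The step needing most care is the limit in \eqref{D182}. The limit must be read from the left, $x\to\sigma-$, where $W_{\mf h_1}$ is a genuine fundamental solution; as a product of two convergent factors it then exists and equals the expression above. Note that the singular values $\Gammas^-$ (entries $a,c$) drop out entirely, which is why only the second column appears in \eqref{D182}. One should also double-check the transpose bookkeeping in \eqref{D124}: the computation $(\ms R_2-\ms R_1)^T$ lies in the lower-left corner, and it is the commutation of this nilpotent matrix with $(\ms R_1^T)^{-1}$ that makes the result independent of $\ms p_1$.
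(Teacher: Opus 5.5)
Your proof is correct. It starts exactly like the paper's: you apply \cref{D123} with $V=W_{\mf h_1}$ to both $\mf h_1$ and $\mf h_2$, note that $U^\pm$ coincide, and obtain $W_{\mf h_2}-W_{\mf h_1}=(\ms p_2-\ms p_1)\,U^-\binom01(1\;\;0)(U^+)^{-1}W_{\mf h_1}$.

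The two proofs differ only in how the row vector $(1\;\;0)(U^+)^{-1}$ is identified. The paper uses $\det U^+=1$ to write it as $(u^+_{22}\;\;-u^+_{12})$. These entries are right-hand limits $\lim_{x\to\sigma+}w_{\mf h_1,i2}(x,z)$, while the column $\binom{u^-_{12}}{u^-_{22}}$ consists of left-hand limits. Matching this mixed expression with the single limit in \eqref{D182} therefore tacitly uses that the one-sided limits agree, which is \eqref{D136} applied to the rows of $W_{\mf h_1}$.

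You instead eliminate $U^+$ through the factorisation for $\mf h_1$ itself, $U^+=U^-\ms R_1^T$, together with $\smmatrix 0010(\ms R_1^T)^{-1}=\smmatrix 0010$. This turns the rank-one matrix into the conjugate $U^-\smmatrix 0010(U^-)^{-1}$, so you need only left-hand limits and $\det U^-=1$. That is a slightly more self-contained way to finish.

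One correction to your last paragraph: the limit in \eqref{D182} does not have to be read from the left only. Right multiplication by $\ms R_1^T=\smmatrix 10{\ms p_1}1$ leaves the second column unchanged. Hence your identity $U^+=U^-\ms R_1^T$ (which uses that $W_{\mf h_1}(t,z)$ is invertible) gives $\lim_{x\to\sigma+}w_{\mf h_1,i2}(x,z)=\lim_{x\to\sigma-}w_{\mf h_1,i2}(x,z)$. So the two-sided limit exists, and your argument covers that reading of \eqref{D182} as well.
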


\pagebreak[3]

\begin{Remark}\label{D184}
\rule{0ex}{1ex}
\begin{Enumerate}
\item
	With the intermediate Weyl coefficient
	\[
		q_\sigma(z) \DE \lim_{x\to\sigma^-}\frac{w_{\mf h_1,12}(x,z)}{w_{\mf h_1,22}(x,z)},
		\qquad z\in\bb C\setminus\bb R,
	\]
	(which is the Weyl coefficient of the Hamiltonian $H_-$) one can rewrite $\ms M(z)$ as
	\[
		\ms M(z) = \Bigl(\lim_{x\to\sigma^-}w_{\mf h_1,22}(x,z)\Bigr)^2\begin{pmatrix} q_\sigma(z) \\[1ex] 1 \end{pmatrix}
		\bigl(1 \;\; -q_\sigma(z)\bigr)
	\]
	for $z\in\bb C\setminus\bb R$.
\item
	\Cref{D180} is related to \cite[Theorem~5.4]{langer.woracek:lemneu} and improves it.
	In that paper only the case of one negative square is treated.  
	Moreover, \eqref{D183} simplifies the result in \cite{langer.woracek:lemneu} substantially
	as only limits of entries of $W_{\mf h_1}$ are needed and no derivatives with respect to the spectral parameter.
	Note that \cite[Theorem~5.4]{langer.woracek:lemneu} actually describes the change of the Weyl coefficient
	when $H_-$ is in limit point case at $s_-$ rather than the change of the monodromy matrix.
\end{Enumerate}
\end{Remark}

\begin{proof}[Proof of \cref{D180}]
	Choose $V=W_{\mf h_1}$ in \cref{D123} and write 
	$U^\pm(z)=(u_{ij}^\pm(z))_{i,j=1}^2$ 
	It follows from \eqref{D124} that
	\begin{align*}
		W_{\mf h_2}(t,z)-W_{\mf h_1}(t,z) 
		&= U^-(z)\begin{pmatrix} 0 & 0 \\[0.5ex] \ms p_2(z)-\ms p_1(z) & 0 \end{pmatrix}\bigl(U^+(z)\bigr)^{-1}W_{\mf h_1}(t,z)
		\\[1ex]
		&= \bigl(\ms p_2(z)-\ms p_1(z)\bigr)U^-(z)\begin{pmatrix} 0 \\ 1 \end{pmatrix}
		\bigl(1 \;\;\; 0 \bigr)\bigl(U^+(z)\bigr)^{-1}W_{\mf h_1}(t,z)
	\end{align*}
	for $t\in(\sigma,s_+]$ and $z\in\bb C$.
	Since $\det U^+(z)=\det W_{\mf h_1}(t,z)=1$ by \eqref{D127}, we obtain
	\begin{align*}
		& U^-(z)\begin{pmatrix} 0 \\ 1 \end{pmatrix}
		\bigl(1 \;\;\; 0 \bigr)\bigl(U^+(z)\bigr)^{-1}
		= \begin{pmatrix} u_{12}^-(z) \\[1ex] u_{22}^-(z) \end{pmatrix}
		\bigl(u_{22}^+(z) \;\; -u_{12}^+(z)\bigr)
		\\[1ex]
		&= \begin{pmatrix} \lim\limits_{x\to\sigma^-}w_{\mf h_1,12}(x,z) \\[2ex] \lim\limits_{x\to\sigma^-}w_{\mf h_1,22}(x,z) \end{pmatrix}
		\Bigl(\lim\limits_{x\to\sigma^+}w_{\mf h_1,22}(x,z) \;\; \lim\limits_{x\to\sigma^+}w_{\mf h_1,12}(x,z)\Bigr),
	\end{align*}
	which proves \eqref{D183} with \eqref{D182}.
\end{proof}

\subsection{An example}
\label{D162}

In this section we revisit an example that is studied in \cite{langer.langer.sasvari:2004}.
Let $s_-=0$, $s_+>1$ and consider the Hamiltonian
\[
	H(t) = \begin{pmatrix} (t-1)^2 & 0 \\[1ex] 0 & \frac{1}{(t-1)^2} \end{pmatrix},
	\qquad t\in(0,s_+)\setminus\{1\}.
\]
It is easy to see that $H$ satisfies \textsf{(I)} and \textsf{(HS)}.
Since $H$ is diagonal, we can use \cref{D63}\,(ii) to find $\mf w_1$, namely,
\begin{equation}\label{D167}
	\mf w_1(t) 
	= \begin{pmatrix} -\int_{s_\pm}^t \frac{1}{(s-1)^2}\DD s \\[1ex] 0 \end{pmatrix}
	= \begin{pmatrix} \frac{1}{t-1}-\frac{1}{s_\pm-1} \\[1ex] 0 \end{pmatrix},
	\qquad t\in I_\pm,
\end{equation}
where $I_-\DE(0,1)$, $I_+\DE(1,s_+)$.
Clearly, $\mf w_1(t)\in L^2(H)$, which implies that $\Delta(H)=1$.

The generalised boundary mappings $\Gammas^\pm(z)$ from \eqref{D135},
for which we only need the functions $\mf w_0$ and $\mf w_1$ by \cref{D119}\,(ii),
are given by
\begin{align*}
	\Gammas^\pm(z)f &= \lim_{x\to1^\pm}\Bigl(\mf w_0(x)^*Jf(x)+z\mf w_1(x)^*Jf(x)\Bigr)
	\\[1ex]
	&= \lim_{x\to1^\pm}\Bigl[f(x)_1-z\Bigl(\frac{1}{x-1}-\frac{1}{s_\pm-1}\Bigr)f(x)_2\Bigr].
\end{align*}

Let us now consider an elementary indefinite Hamiltonian of kind (A) $\mf h=\langle H;\oe,b_j;d_j\rangle$ with $d_0,d_1\in\bb R$,
$\oe\in\bb N_0$, and real numbers $b_1,\ldots,b_\oe$ with $b_1\ne0$ if $\oe>0$.
It is easy to see (cf.\ \cite[Theorem~7.1]{langer.langer.sasvari:2004}) that
the matrix function
\begin{equation}\label{D192}
	W(t,z) =
	\begin{pmatrix}
		\dfrac{\sin(zx)-z\cos(zx)}{z(x-1)} & \Bigl(\dfrac{1}{z^2}-(x-1)\Bigr)\sin(zx)-\dfrac{x\cos(zx)}{z}
		\\[2.5ex]
		\dfrac{\sin(zx)}{x-1} & \dfrac{\sin(zx)}{z}-(x-1)\cos(zx)
	\end{pmatrix}
\end{equation}
satisfies \eqref{D70} on $(0,s_+)\setminus\{1\}$ and the relation $W(0,z)=I$.
Hence $W_{\mf h}(t,z)=W(t,z)$ for $t\in[0,1)$ and $z\in\bb C$.
Moreover, we can use $W$ as the matrix $V$ in \cref{D123}.
Let $U^\pm(z)$ be defined as in \eqref{D178}.
An elementary calculation shows that
\begin{align*}
	U^-(z)
	&=
	\begin{pmatrix}
		z\sin z-\dfrac{\sin z}{z}+2\cos z \; & \dfrac{\sin z}{z^2}-\dfrac{\cos z}{z}
		\\[2ex]
		z\cos z-\sin z & \dfrac{\sin z}{z}
	\end{pmatrix},
	\\[1ex]
	\bigl(U^+(z)\bigr)^{-1}
	&=
	\begin{pmatrix}
		\dfrac{\sin z}{z} & -\dfrac{\sin z}{z^2}+\dfrac{\cos z}{z}
		\\[2ex]
		-z\cos z-\dfrac{\sin z}{s_+-1} & \; z\sin z+\dfrac{\sin z}{z(s_+-1)}+\cos z-\dfrac{\cos z}{s_+-1}
	\end{pmatrix}.
\end{align*}
With $\ms R(z)$ defined as in \eqref{D189} we obtain from \cref{D123} and with another lengthy 
but elementary calculation that, for $t\in(1,s_+]$ and $z\in\bb C$,
\begin{align*}
	W_{\mf h}(t,z) &= U^-(z)\bigl(\ms R(z)\bigr)^T\bigl(U^+(z)\bigr)^{-1}W(t,z)
	\\[1ex]
	&= W(t,z) + \Bigl(\ms p(z)-\frac{s_+}{s_+-1}z\Bigr)\ms N(z)W(t,z)
\end{align*}
where
\[
	\ms N(z) =
	\begin{pmatrix}
		\dfrac{\sin z}{z}\Bigl(\dfrac{\sin z}{z^2}-\dfrac{\cos z}{z}\Bigr) & -\Bigl(\dfrac{\sin z}{z^2}-\dfrac{\cos z}{z}\Bigr)^2
		\\[2ex]
		\dfrac{\sin^2 z}{z^2} & -\dfrac{\sin z}{z}\Bigl(\dfrac{\sin z}{z^2}-\dfrac{\cos z}{z}\Bigr)
	\end{pmatrix}.
\]
In particular, if we choose
\begin{equation}\label{D190}
	d_0=-\frac{s_+}{s_+-1}, \quad d_1=0, \quad \oe=0,
\end{equation}
then $W_{\mf h}=W$.
It is easy to see that the matrix $\ms N(z)$ equals $\ms M(z)$ in \cref{D180} 
if $\mf h_1$ is chosen with the parameters in \eqref{D190}.



{\footnotesize
\begin{flushleft}
	M.~Langer \\
	Department of Mathematics and Statistics \\
	University of Strathclyde \\
	26 Richmond Street \\
	Glasgow G1 1XH \\
	UNITED KINGDOM \\
	email: \texttt{m.langer@strath.ac.uk} \\[5mm]
\end{flushleft}
\begin{flushleft}
	H.\,Woracek\\
	Institute for Analysis and Scientific Computing\\
	Vienna University of Technology\\
	Wiedner Hauptstra{\ss}e\ 8--10/101\\
	1040 Wien\\
	AUSTRIA\\
	email: \texttt{harald.woracek@tuwien.ac.at}\\[5mm]
\end{flushleft}
}

\end{document}